\theoremstyle{plain}
\newtheorem{thm}{Theorem}
\newtheorem{prop}{Proposition}
\newtheorem{lemma}{Lemma}
\theoremstyle{remark}
\newtheorem{rem}{Remark}
\def\iid{\stackrel{\textrm{i.i.d.}}{\sim}}
\newcommand{\pp}{\mathbb{P}}
\newcommand{\ee}{\mathbb{E}}
\newcommand{\rr}{\mathbb{R}}
\newcommand{\N}{\mathbb{N}}
\newcommand*\dd{\mathop{}\!\mathrm{d}}
\newcommand{\ig}{\operatorname{IG}}
\newcommand{\geom}{\operatorname{Geom}}
\newcommand{\ind}{\mathbf{1}}
\newcommand{\pr}{\operatorname{Pr}}
\newcommand{\la}{\lambda}
\newcommand{\scr}[1]{{\mathcal #1}}
\newcommand{\eps}{\epsilon}
\definecolor{lime}{HTML}{A6CE39}
\DeclareRobustCommand{\orcidicon}{%
	\begin{tikzpicture}
	\draw[lime, fill=lime] (0,0) 
	circle [radius=0.16] 
	node[white] {{\fontfamily{qag}\selectfont \tiny ID}};
	\draw[white, fill=white] (-0.0625,0.095) 
	circle [radius=0.007];
	\end{tikzpicture}
	\hspace{-2mm}
}
\xdef\csname orcid\x\endcsname{\noexpand\href{https://orcid.org/\csname orcidauthor\x\endcsname}{\noexpand\orcidicon}}
\begin{document}

\linespread{1.25}

\title[Decompounding discrete distributions]{Decompounding discrete distributions: A non-parametric Bayesian approach}

\author[S.~Gugushvili]{Shota Gugushvili$^1$\orcidA{}}
\address{$^1$Biometris\\
	Wageningen University \& Research}
\email{shota@yesdatasolutions.com}

\author[E.~Mariucci]{Ester Mariucci$^2$\orcidB{}}
\address{$^2$Institut f\"{u}r Mathematik\\
Potsdam Universit\"{a}t}
\email{mariucci@ovgu.de}

\author[F. H.~van der Meulen]{Frank van der Meulen$^3$\orcidC{}}
\address{$^3$Delft Institute of Applied Mathematics\\
	Delft University of Technology}
\email{f.h.vandermeulen@tudelft.nl}

\thanks{The research leading to the results in this paper has received funding from the European Research Council under ERC Grant Agreement 320637, from
the Deutsche Forschungsgemeinschaft (DFG, German Research Foundation) -- 314838170,
GRK 2297 MathCoRe, and from the Deutsche Forschungsgemeinschaft (DFG) through the grant CRC 1294 `Data Assimilation'}


\keywords{Compound Poisson process; Data augmentation;  Diophantine equation; Gibbs sampler; L\'evy measure; Metropolis-Hastings algorithm; Non-parametric Bayesian estimation; Posterior contraction rate}

\begin{abstract}

Suppose that a compound Poisson process is observed discretely in time and assume that its jump distribution is supported on the set of natural numbers. In this paper we propose a non-parametric Bayesian approach to estimate the intensity of the underlying Poisson process and the distribution of the jumps. We provide a MCMC scheme for obtaining samples from the posterior.  We apply our method on both simulated and real data examples, and compare its performance with the frequentist plug-in estimator proposed by Buchmann and Gr\"ubel. On a theoretical side, we study the posterior from the frequentist point of view and prove that as the sample size $n\rightarrow\infty$, it contracts around the `true', data-generating parameters at  rate $1/\sqrt{n}$, up to a $\log n$ factor. 

\end{abstract}


\maketitle

\section{Introduction}
\label{intro}

\subsection{Problem formulation}

Let $N=(N_{t} : t\geq 0)$ be a Poisson process with a constant intensity $\lambda>0$, and let $Y_i$ be a sequence of independent random variables, each with distribution $P$, that are also independent of $N$. By definition, a compound Poisson process (abbreviated CPP) $X=(X_t : t\geq 0)$ is 
\begin{equation}
\label{cpp}
X_t=\sum_{j=1}^{N_t} Y_j,
\end{equation}
where here and below the sum over an empty index set is understood to be equal to zero. In particular, $X_0=0$. CPP constitutes a classical model in, e.g., risk theory, see~\cite{embrechts97}.

Assume that the process $X$ is observed at discrete times $0<t_1<t_2<\ldots<t_n=T$, where the instants $t_i$ are not necessarily equidistant on $[0,T]$. Based on the observations $X_{t_1},X_{t_2},\ldots,X_{t_n}$, our goal is to estimate the jump size distribution $P$ and the intensity $\lambda$.  We specifically restrict our attention to the case where $P$ is a discrete distribution, $P(\mathbb{N})=1$, and we will write $p=(p_k)_{k\in\mathbb{N}}$ for the probability mass function corresponding to $P$, where $p_k=P(\{k\})$. A similar notation will be used for any other discrete law. The distribution $P$ is called the base distribution. Abusing terminology, we will at times identify it with the corresponding probability mass function $p.$ An assumption that $P$ has no atom at zero is made for identifiability: otherwise this atom gets confounded with $e^{-\lambda}$, which does not allow consistent estimation of the intensity $\lambda$. For a discussion of applications of this CPP model in risk theory, see \cite{zhang14}.

Define the increments $Z_i = X_{t_i} - X_{t_{i-1}}$, $i=1,\ldots,n$. Then $\mathcal{Z}_n = (Z_i:i = 1,\ldots,n)$ is a sequence of  independent random variables. When $\{t_i\}$ are equidistant on $[0,T]$, the random variables $Z_i$ have in fact a common distribution $Q$ satisfying $Q(\mathbb{N}_0)=1$. As $\mathcal{Z}_n$ carries as much information as $(X_{t_i}:i=1,\dots,n)$ does, we can base our estimation procedure directly on the increments $\mathcal{Z}_n$. Since summing up the jumps $Y_j$'s amounts to compounding their distributions, the inverse problem of recovering $P$ and $\lambda$ from $Z_i$ can be referred to as decompounding; see \cite{buchmann03}.

There are two natural ways to parametrise the CPP model: either in terms of the pair $(\lambda,p)$, or in terms of the L\'evy measure $\nu=(\nu_k)_{k\in\mathbb{N}}$ of the process $X$, see \cite{sato13}. A relationship between the two is $\lambda=\sum_{k=1}^{\infty}\nu_k$ and $p=\nu/\lambda$. Inferential conclusions in one parametrisation can be easily translated into inferential conclusions into another parametrisation. However, for our specific statistical approach the L\'evy measure parametrisation turns out to be more advantageous from the computational point of view.

\subsection{Approach and results}

In this paper, we take a non-parametric Bayesian approach to estimation of the L\'evy measure $\nu$ of $X.$ See \cite{ghosal17} and \cite{mueller15} for modern expositions of Bayesian non-parametrics. A case for non-parametric Bayesian methods has already been made elsewhere in the literature, and will not be repeated here. On the practical side, we implement our procedure via the Gibbs sampler and data augmentation, and show that it performs well under various simulation setups. On the theoretical side, we establish its consistency and derive the corresponding posterior contraction rate, which can be thought of as an analogue of a convergence rate of a frequentist estimator (see \cite{ghosal17}). The posterior contraction rate, up to a practically insignificant $\log n$ factor, turns out to be $1/\sqrt{n}$, which is an optimal rate for non-parametric estimation of cumulative distribution functions. Our contribution thus nicely bridges practical and theoretical aspects of Bayesian non-parametrics.

\subsection{Related literature}

To provide a better motivation for our model and approach, in this subsection we briefly survey the existing literature. A Bayesian approach to non-parametric inference for L\'evy processes is a very recent and emerging topic, with references limited at the moment to \cite{belomestny18}, \cite{gugu15}, \cite{gugu18} and \cite{nickl17}. These deal exclusively with the case when the L\'evy measure is absolutely continuous with respect to the Lebesgue density. At least from the computational point of view, these works are of no help in our present context.

Related frequentist papers for CPP models with discrete base distributions are \cite{buchmann03} and \cite{buchmann04}, which, after earlier contributions dating from the previous century, in fact revived interest in non-parametric techniques for L\'evy processes. To estimate the base distribution $p$, \cite{buchmann03} employ a frequentist plug-in approach relying on the  Panjer recursion (i.e., an empirical cumulative distribution estimate of $q$ is plugged into the Panjer recursion equations to yield an estimate of $p$; see below on the Panjer recursion). The drawback is that the parameter estimates are not guaranteed to be non-negative. \cite{buchmann04} fix this problem by truncation and renormalisation. This works, but looks artificial. As noted in \cite{buchmann04}, in practice the latter approach breaks down if no zero values are observed among $Z_i$'s. \cite{buchmann04} establish weak convergence of their modified estimator, but on the downside its asymptotic distribution is unwieldy to give confidence statements on $p$. Most importantly, the plug-in approaches in \cite{buchmann03} and \cite{buchmann04} do not allow obvious generalisations to non-equidistant observation times $\{t_i\}.$ In \cite{lindo18}, another frequentist estimator of the jump measure is introduced, that is obtained via the steepest descent technique as a solution to an optimisation problem over the cone of positive measures. The emphasis in \cite{lindo18}  is on numerical aspects; again, no obvious generalisation to the case of non-equidistant $\{t_i\}$ is available.

Finally, some important, predominantly theoretical references on inference for L\'evy processes are \cite{comte11}, \cite{duval11}, \cite{kappus14}, \cite{neumann09}, \cite{nickl12}, \cite{vanes07} and \cite{trabs15}. We refer to \cite{belomestny15}, \cite{coca18}, \cite{coca18_cpp} and \cite{duval17} for more extensive literature surveys.

\subsection{Outline}
The rest of the paper is organised as follows: in Section \ref{sec:algorithm} we introduce our approach and describe an algorithm for drawing from the posterior distribution. In Sections \ref{sec:examples} and \ref{sec:realdata} we study its performance on synthetic and real examples. Section \ref{sec:asymptotics} is devoted to the examination of asymptotic frequentist properties of our procedure. An outlook on our results is given in Section \ref{sec:outlook}. Finally, in Appendix \ref{app:lemmas} technical lemmas used in the proofs of Section \ref{sec:asymptotics} are collected, whereas Appendix \ref{app:bg:boot} contains some additional simulation results.

\subsection{Notation}
For two sequences $\{a_n\}$ and $\{b_n\}$ of positive real numbers, the notation $a_n\lesssim b_n$  (or $b_n\gtrsim a_n$) means that there exists a constant $C>0$ that is independent of $n$ and such that $a_n\leq C b_n.$ We write $a_n\asymp b_n$ if both $a_n\lesssim b_n$ and $a_n\gtrsim b_n$ hold.
We denote a prior (possibly depending on the sample size $n$) by $\Pi_n$. The corresponding posterior measure is denoted by  $\Pi_n(\cdot\mid \mathcal{Z}_n).$ The Gamma distribution with shape parameter $a$ and rate parameter $b$ ($a,b >0$) is denoted by $\operatorname{Gamma}(a,b)$. Its density is given by
$x \mapsto \frac{b^a}{\Gamma(a)} x^{a-1}e^{-bx}, \quad x>0$,
where $\Gamma$ is the Gamma function. The inverse Gamma distribution with shape parameter $a$ and scale parameter $b$ is denoted by $\ig(a,b)$. Its density is 
$x \mapsto \frac{b^a}{\Gamma(a)} x^{-a-1}e^{-b / x}, \quad x>0$. We use the notation $\operatorname{Exp}(a)$ for an exponential distribution with mean $1/a$.
Finally, given a metric $d$ on a set $\mathcal Q$ and $\epsilon > 0$, the covering number $N(\epsilon,\mathcal Q, d)$ is defined as the minimal number of balls of radius $\epsilon$ needed to cover $\mathcal Q$.
\section{Algorithm for drawing from the posterior}
\label{sec:algorithm}

A Bayesian statistical approach relies on the combination of the likelihood and the prior on the parameter of interest through Bayes' formula. We start with specifying the prior. As far as the likelihood is concerned, although explicit, it is intractable from the computational point of view for non-parametric inference in CPP models. We will circumvent the latter problem by means of data augmentation, as detailed below.

\subsection{Prior}
\label{subsec:prior}
We define a prior $\Pi$ on $\nu$ through a hierarchical specification 
\begin{align*}
\{\nu_k\}_{k=1}^\infty  \mid a, m, \beta_k &  \iid \operatorname{Gamma}(a,1/\beta_k)	\cdot \mathbf{1}_{\{1\le k\le m\}}, \\ 
	\beta_1,\ldots, \beta_m \mid \gamma & \iid\ig(c,\gamma),\\
	\gamma &\sim \operatorname{Exp}(1).
\end{align*}	
 Note that the (fixed) hyperparameters $m\in\mathbb{N}$, $a, c >0$ are denoted by  Latin letters. 

The hyperparameter $m$ incorporates our  a priori opinion on the  support of the L\'evy measure $\nu$, or equivalently, the base measure $p$. In applications, the support of $p$ may be unknown, which necessitates the use of a large $m$, e.g.\ $m=\max_{i=1,\ldots,n} Z_i$; this latter is the maximal value suggested by the data $\mathcal{Z}_n$ at hand. Nevertheless, we may simultaneously expect that the `true', data-generating $\nu$ charges full mass only to a proper, perhaps even a small subset of the set $\{1,\ldots,m\}$. In other words, $\nu$ may form a sparse sequence, with many components equal to zero. In fact, there are at least two plausible explanations for an occurrence of a large increment $Z_i$ in the data: either a few large jumps $Y_j$'s occurred, which points towards a large right endpoint of the support of $\nu_0$, or $Z_i$ is predominantly formed of many small jumps, which in turn indicates that the intensity $\lambda$ of the Poisson arrival process $N$ may be large. To achieve accurate estimation results, a prior should take a possible sparsity of $\nu$ into account. This is precisely the reason of our hierarchical definition of the prior $\Pi$: a small $\beta_k$ encourages a priori the shrinkage of the components $\nu_k$ of $\nu$ towards zero.


\subsection{Data augmentation}

Assume temporarily $t_i=i,$ $i=1,\ldots,n$, and write $q=(q_k)_{k\in\mathbb{N}_0}$ for $q_k=q(\{k\})$. Then $Z_i$ have the distribution
\begin{equation}
	\label{eq:comp}
	q = e^{-\lambda} \sum_{j=0}^{\infty} \frac{\lambda^j}{j!}p^{\ast j},
\end{equation}
with $*$ denoting convolution.  The compounding mapping $(\lambda,p)\mapsto q$ can be expressed explicitly via the Panjer recursion (see \cite{panjer81}):
\[
q_0 = e^{-\lambda}, \quad q_k = \frac{\lambda}{k}\sum_{j=1}^{k} j p_j q_{k-j}, \quad k\in\mathbb{N}.
\]
This recursion can be inverted to give the inverse mapping $q\mapsto(\lambda,p)$ via
\[
\lambda = - \log q_0, \quad p_k = - \frac{q_k}{q_0\log q_0} - \frac{1}{kq_0}\sum_{j=1}^{k-1}jp_jq_{k-j}, \quad k\in\mathbb{N}.
\]

In view of \eqref{eq:comp}, the likelihood in the CPP model is explicit. Nevertheless, an attempt to directly use \eqref{eq:comp} or the Panjer recursions in posterior computations results in a numerically intractable procedure. Equally important is the fact that a Panjer recursion based approach would not apply to non-equidistant observation times $\{t_i\}$. Therefore, instead of \eqref{eq:comp} and the Panjer recursion, we will employ  data augmentation, see \cite{tanner87}. We switch back to the case when $\{t_i\}$ are not necessarily uniformly spaced.  The details of our procedure are as follows: when the process $X$ is observed continuously over the time interval $[0,T]$, so that our observations are a full sample path $X^{(T)} = (X_t:t\in[0,T])$ of CPP, the likelihood is tractable and is proportional to
\[
e^{-T\sum_{k=1}^m\nu_k} \prod_{k=1}^m \nu_k^{\mu_k},
\]
see \cite{shreve04}, p.~498. Here
\begin{equation}
\label{eq:muk}
\mu_k = \# \{Y_j=k\},
\end{equation}
i.e.\ the total number of jumps of size $k$.
Then the  prior $\Pi$ from Subsection \ref{subsec:prior} leads to conjugate posterior computations. In fact, the full conditionals are
\begin{gather*}
\nu_k \mid \{\mu_k\},\, \{\beta_k\} \iid \operatorname{Gamma}(a+\mu_k,1/\beta_k+T), \quad k=1,\ldots,m,\\
\beta_k \mid \{\nu_k\},\, \gamma \iid \ig(a + c,\gamma + \nu_k), \quad k=1,\ldots,m,\\
\gamma \mid \{\beta_k\} \sim \operatorname{Gamma}\left(c m+1, 1+\sum_{k=1}^m \beta_k^{-1}\right).
\end{gather*}
Therefore, the Gibbs sampler for posterior inference on $\nu$ can be implemented. The Gibbs sampler cycles through the above conditionals a large number of times, generating approximate (dependent) samples from the posterior. See, e.g.,  \cite{gelfand90} and Section 24.5 in \cite{wasserman04} on the Gibbs sampler and its use in Bayesian statistics.


As we do not observe the process $X$ continuously, we will combine the above with the data augmentation device. First note that we have
\[
Z_i = \sum_{j=1}^m j \mu_{ij},
\]
where $(\mu_{ij}:i=1,\ldots,n, j=1,\ldots,m)$ are independent, and $\mu_{ij} \sim \operatorname{Poisson}(\Delta_i\nu_j)$ for $\nu_j = \lambda p_j$ and $\Delta_i = t_i - t_{i-1}$; see Corollary 11.3.4 in \cite{shreve04}. Furthermore, for $\mu_k$ as in \eqref{eq:muk} we trivially have $\mu_k=\sum_{i=1}^n \mu_{ik}$. Data augmentation iterates the following two steps:
\begin{enumerate}[(i)]
	\item Draw $(\mu_{ij})$ conditional on the data $\mathcal{Z}_n$ and the parameter $\nu$.
	\item Draw $\nu$ conditional on $(\mu_{ij})$.
\end{enumerate}
Once the algorithm has been run long enough, this gives approximate (dependent) samples from the posterior of $\nu$. We already know how to deal with step (ii); now we need to handle step (i).

Thus, keeping $\nu$ fixed, for each $i$ we want to compute the conditional distribution
$(\mu_{ij}:j=1,\ldots,m) \mid Z_i$, and furthermore, we want to be able to simulate from this distribution. In turn, this will immediately allow us to simulate $\mu_k$ conditional on the data $\mathcal{Z}_n$. Now, with $\pr(\cdot)$ referring to probability under the parameter $\nu$, it holds that
\begin{multline*}
\pr(\mu_{i1}=k_1,\ldots,\mu_{im}=k_m \mid Z_i = z_i) \\
 = \frac{1}{\pr(Z_i = z_i)} e^{-\Delta_i \sum_{j=1}^m \nu_j} \prod_{j=1}^m\frac{(\Delta_i\nu_j)^{k_j}}{k_j!} \ind\Big\{\sum_{j=1}^m j k_j = z_i\Big\}.
\end{multline*}
Knowledge of the normalising constant $\pr(Z_i = z_i)$ will not be needed in our approach.

In general, simulation from a discrete multivariate distribution is non-trivial; some general options are discussed in \cite{devroye86}, Chapter XI, Section 1.5, but are unlikely to work easily for a large $m$. We will take an alternative route and use the Metropolis-Hastings algorithm, see, e.g., Section 24.4 in \cite{wasserman04}. We start by observing that for a fixed $i$, the support of $\pr( \cdot \mid  Z_i = z_i)$ is precisely the set $\mathcal{S}_i$ of non-negative solutions $(k_1,\ldots,k_m)$ of the Diophantine equation
$
\sum_{j=1}^m j k_j = z_i.
$
The {\bf R} package {\bf nilde} (see \cite{nilde}) implements an algorithm from \cite{voinov97} that finds all such solutions for given integers $m$ and $z_i$. By Markovianity of the process $X$, we can simulate the vectors $(\mu_{i1},\ldots,\mu_{im})$  independently for each $i=1,\ldots,n$. If $z_i=0$ or $1$, there is only one solution to the Diophantine equation: the trivial solution $(0,\ldots,0)$ in the first case, and the solution $(1,0,\ldots,0)$ in the second case; for such $z_i$, no simulation is required, as $(\mu_{i1},\ldots,\mu_{im})$ is known explicitly. We thus only need to consider each $i\in\mathcal{I} = \{i:z_i \neq 0 \textrm{ or } 1\}$ in turn, and design a Metropolis-Hastings move on the set of the corresponding solutions $\mathcal{S}_i$. Fix once and for all an ordering of elements in $\mathcal{S}_i$ (this could be, e.g., lexicographic, or reverse lexicographic); we use the notation $|\mathcal{S}_i|$ for the cardinality of $\mathcal{S}_i$. Let $\mu = (\mu_{i1},\ldots,\mu_{im})$ be the current state of the chain, corresponding to the $\ell$th element $s_{\ell}$ of $\mathcal{S}_i$. A proposal $\mu^{\circ} = (\mu_{i1}^{\circ},\ldots,\mu_{im}^{\circ})$ is obtained as follows:

\begin{enumerate}[(i)]
	\item If $\ell = 1$, draw $\mu^{\circ}$ uniformly at random among the elements $\{s_2,s_{|\mathcal{S}_i|}\}$ of $\mathcal{S}_i$.
	\item If $\ell = |\mathcal{S}_i|$, draw  $\mu^{\circ}$ uniformly at random among the elements $\{s_1,s_{ |\mathcal{S}_i| -1 }\}$ of $\mathcal{S}_i$.
	\item If $\ell \neq 1$ or $|\mathcal{S}_i|$, draw  $\mu^{\circ}$ uniformly at random among the elements $\{s_{\ell-1},s_{\ell+1}\}$ of $\mathcal{S}_i$.

\end{enumerate}
Occasionally, one may want to propose another type of a move too.
\begin{enumerate}[(i)]
	\setcounter{enumi}{3}
	\item Draw $\mu^{\circ} = (\mu_{i1}^{\circ},\ldots,\mu_{im}^{\circ})$ uniformly at random from $\mathcal{S}_i$.
\end{enumerate}
The two proposals lead to reversible moves, and one may also alternate them with probabilities $\pi$ and $1-\pi$, e.g.\ $\pi=0.8$. The logarithm of the acceptance probability of a move from $(\mu_{i1},\ldots,\mu_{im})$ to $(\mu_{i1}^{\circ},\ldots,\mu_{im}^{\circ})$ is computed as
\[
\log A = \sum_{k=1}^m (\mu_{ik}^{\circ} - \mu_{ik}) \log(\Delta_i  \nu_k) + \sum_{k=1}^m \left \{ \log(\mu_{ik}!) - \log(\mu_{ik}^{\circ}!) \right\}.
\]
The move is accepted if $\log U \leq  \log A$ for $U$ an independently generated uniform random variate on $[0,1]$, and in that case the current state of the chain is reset to $(\mu_{i1}^{\circ},\ldots,\mu_{im}^{\circ})$. Otherwise the chain stays in $(\mu_{i1},\ldots,\mu_{im})$. 



\section{Simulation  examples}
\label{sec:examples}

In this section, we test performance of our approach in a range of representative simulation examples. For benchmarking, we use the frequentist plug-in estimator from \cite{buchmann04}. Two real data examples are given in Section \ref{sec:realdata}. Unless otherwise stated, we took $c=2$ and $a=0.01$ as hyperparameters in our prior specification. As can be seen from the update formulae for the Gibbs sampler, as long as $a$ is not taken too large, its precise value is not very influential on the posterior, given a reasonable sample size. The value $c=2$ ensures that the update step for $\beta_k$ has finite variance. At each step of  updating the imputed data for increment size $z$ we have chosen with probability $0.2$ to propose uniformly from all solutions to the Diophantine equation (for that particular value of $z$). 

We implemented our procedure in Julia, see \cite{julia}. The code and datasets for replication of our examples are available on GitHub\footnote{See \url{https://github.com/fmeulen/Bdd}} and Zenodo, see \cite{zenodo_bdd}.

\subsection{Uniform base distribution}\label{subsec:uniform}

This simulation example follows with some extensions that in \cite{buchmann04}. Let $\lambda_0 = 2$, and let $P_0$ be the discrete uniform distribution on $\{1,4,6\}$. We simulated data according to the following settings:
\begin{itemize}
  \item[(a)] $n=100$, $\Delta_i=1$ for $1\le i\le n$;
  \item[(b)] $n=500$, $\Delta_i=1$ for $1\le i\le n$ (the data under (a) are augmented with $400$ extra observations);
  \item[(c)] $n=500$, $\Delta_i=\operatorname{Unif}(0,2)$ for $1\le i\le n$.  
\end{itemize}

We set $m= \min(15,Z_{(n)})$, where $Z_{(n)} = \max_{1\leq i \leq n} Z_i.$ In all cases this led to $m=15$, as the value of $Z_{(n)}$ was equal to $30$, $35$ and $40$ for the simulated data under settings (a), (b) and (c), respectively.
The Gibbs sampler was run for $500{,}000$ iterations, of which the first $250{,}000$ were discarded as burn-in. From the remaining samples, the posterior mean and $2.5\%$ and $97.5\%$ percentiles were computed for each coefficient $\nu_k$. The results for the first $10$ coefficients are shown in Figure  \ref{fig:example1}. For comparison, the estimator from \cite{buchmann04} is also included in the figure. 
\begin{figure}
	\centering
		\captionsetup{width=0.8\textwidth, font=small}
		\includegraphics[trim=0.21cm 0cm 0cm 0cm, clip,width=0.8\textwidth]{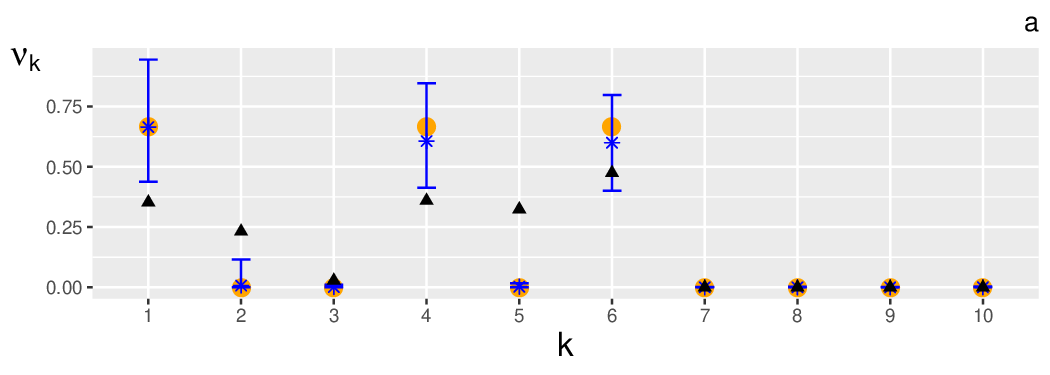}
		\includegraphics[trim=0.21cm 0cm 0cm 0cm, clip,width=0.8\textwidth]{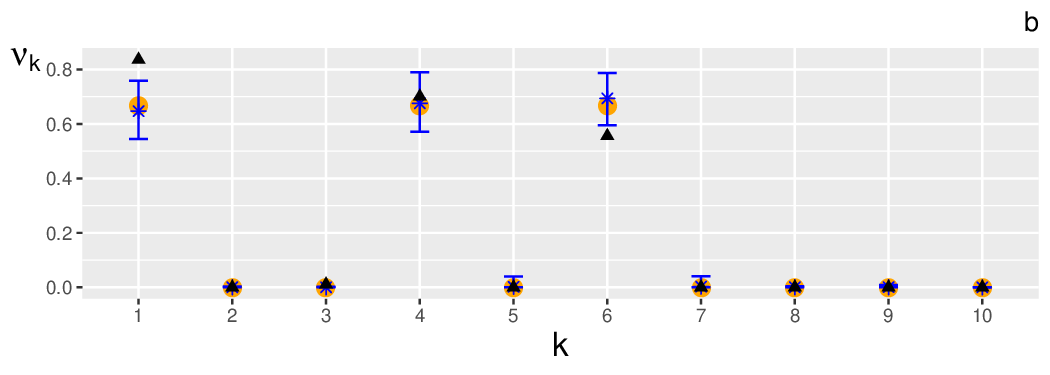}
		\includegraphics[trim=0.21cm 0cm 0cm 0cm, clip,width=0.8\textwidth]{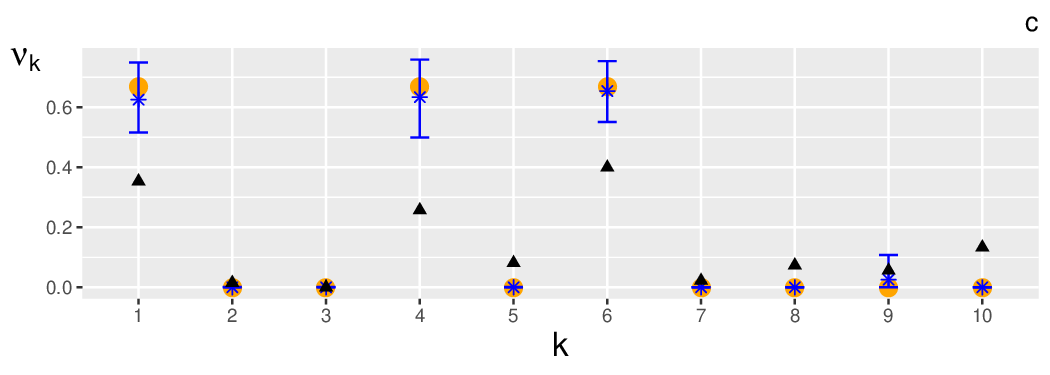}
	\caption{Simulation example from Section \ref{subsec:uniform}. In each figure, the horizontal axis gives the magnitudes of $\nu_k$, $k\in \{1,\ldots, 10\}$. The orange balls denote the true values, the black triangles the Buchmann-Gr\"{u}bel estimator. The blue crosses give the posterior means, whereas the vertical blue line segments represent (pointwise) $95\%$ credible intervals. The settings corresponding to (a), (b) and (c) are explained in the main text. Note the differences in vertical scale across the figures.
  \label{fig:example1}}
\end{figure}
For setting (b), traceplots of every $50$th iteration for a couple of coefficients $\nu_k$ are shown in Figure \ref{fig:example1-traceA2}. 
\begin{figure}
	\centering
	\captionsetup{width=0.8\textwidth, font=small}
	\includegraphics[trim=0.6cm 0cm 0cm 0cm, clip,width=0.8\textwidth]{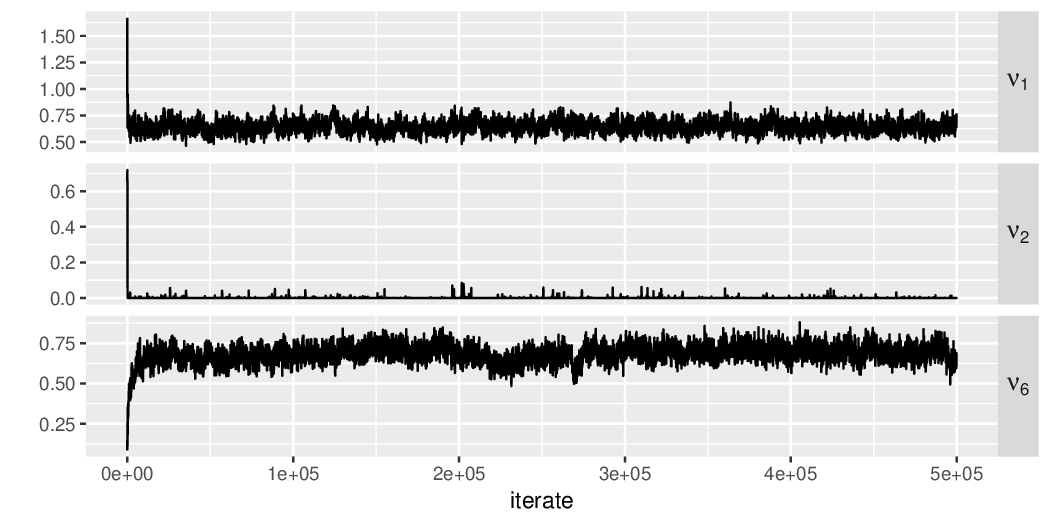}
		\caption{Traceplots for the simulation example from Section \ref{subsec:uniform} under setting (b). The posterior samples were subsampled, with every $50$th iteration kept. The displayed results are for parameters $\nu_1$, $\nu_6$ and $\nu_9$.
			\label{fig:example1-traceA2}}
\end{figure}
We measure the error of an estimate $\{\hat\nu_k\}$ by $\operatorname{Err}(\nu, \hat\nu) = \sum_{k=1}^{\infty} |\hat\nu_k - \nu_k|$. The errors are reported in Table \ref{table:unif}. In all  settings, for these particular realisations of the simulated data, the Bayesian procedure outperformed the truncated estimator from \cite{buchmann04}. For setting (c), the latter produces a poor result, as was to be expected, given that it is derived under the assumption $\Delta_i=1$ for all $i$. An advantage of the Bayesian procedure is the included measure of uncertainty, namely the credible intervals for $\nu_k$. On the other hand, for  the Buchmann-Gr\"{u}bel estimator it is hardly possible to derive confidence intervals via an asymptotic method, since the limiting distribution of the estimator is fairly complicated. Although not considered in the original publications \cite{buchmann03} and \cite{buchmann04}, a natural alternative is the bootstrap. A detailed examination of the performance of the latter and its comparison to that of the Bayesian method lies beyond the scope of the present paper. Indeed, any thorough study would require, on one hand, the asymptotic justification of bootstrap confidence intervals, and on another hand establishing frequentist coverage properties of our Bayesian procedure. In that respect, good performance of neither method is automatically warranted (see, e.g., \cite{vanderpas17} and \cite{vdv98}, Chapter~23). Here instead we opt for a numerical illustration, which is reported in Appendix \ref{app:bg:boot}.

{\small   
\begin{table}
\begin{center}
\captionsetup{width=0.8\textwidth, font=small}
\caption{Results for scenarios (a)--(c) from Section~\ref{subsec:uniform}.}
\begin{tabular}{l c c c}
\toprule
Simulation setting & (a) & (b) & (c)\\
\midrule
{Buchmann-Gr\"ubel estimator} & $1.40$ & $0.32$ & $1.44$ \\
{Posterior mean} & $0.15$ & $0.07$ & $0.12$ \\
\bottomrule
\end{tabular}
\label{table:unif}
\end{center}
\end{table}
}

\subsection{Geometric base distribution}
\label{subsec:geom}

The setup of this synthetic data example likewise follows that in \cite{buchmann04}. Assume $q$ is a geometric distribution with parameter $\alpha$, i.e.\ $q_k=(1-\alpha)^k \alpha$ for $0 < \alpha <1$, $k\in\N_0$. Then $\lambda=-\log\alpha$, and
\[
p_{k} = - \frac{(1-\alpha)^k}{k\log\alpha}, \quad k\in\mathbb{N}.
\]
Hence, $\nu_{k}=(1-\alpha)^k/k$.

We consider two simulation setups:
\begin{itemize}
\item[(a)] $n=500$, $\Delta_i=1$ for $1\le i \le n$ and  $\alpha=1/3$;
\item[(b)] $n=500$, $\Delta_i=1$ for $1\le i \le n$ and $\alpha=1/6$.
\end{itemize}
We set $m= \min(15,Z_{(n)})$ and ran the sampler according to the settings of Section \ref{subsec:uniform}. 
The results for both scenarios (a) and (b) are reported in Figure \ref{fig:example2}. In Table \ref{table:geom} we also report estimation errors in one simulation run. For this example and these generated data, the Bayesian procedure gives less precise point estimates than the Buchmann-Gr\"{u}bel method. Note that estimation error for $\alpha = 1/3$ is smaller than that for $\alpha = 1/6.$ This appears intuitive, as a smaller value of $\alpha$ corresponds to a larger value of $\lambda.$ The latter implies that on average each $Z_i$ is a superposition of a larger number of jumps, which renders the decompounding problem more difficult. However, this argument is hard to formalise.

{\small
\begin{table}
\captionsetup{width=0.8\textwidth, font=small}
\caption{Results for scenarios (a)--(b) from Section~\ref{subsec:geom}.}
\begin{center}
	\begin{tabular}{l c c }
		\toprule
		Simulation setting & (a) & (b) \\
		\midrule
		{Buchmann-Gr\"ubel estimator} & $0.28$ & $0.60$ \\
		{Posterior mean} & $0.52$ & $1.05$ \\
		\bottomrule
	\end{tabular}
\label{table:geom}
\end{center}
\end{table}
}

\begin{figure}
	\centering
	\captionsetup{width=0.8\textwidth, font=small}
		\includegraphics[trim=0.21cm 0cm 0cm 0cm, clip,width=0.8\textwidth]{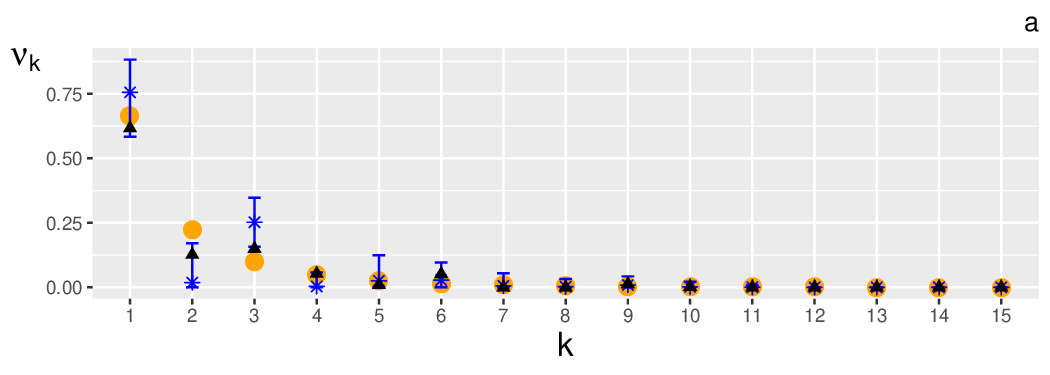}\\
		\includegraphics[trim=0.21cm 0cm 0cm 0cm, clip,width=0.8\textwidth]{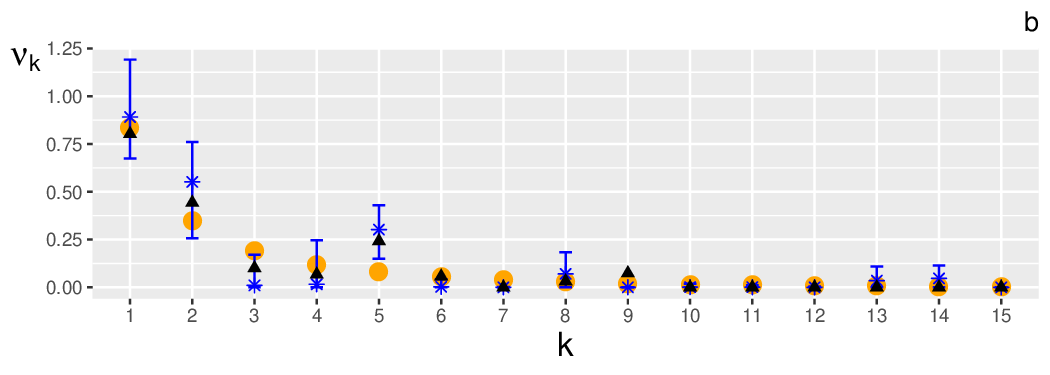}
	\caption{Simulation example from Section \ref{subsec:geom}.  Settings (a) and (b)  correspond to the true jump distributions $\geom(1/3)$ and $\geom(1/6),$ respectively. The horizontal axis gives the magnitudes of $\nu_k$, $k\in 
	\{1,\ldots, 15\}$. The orange balls denote the true values, the black triangles the Buchmann-Gr\"{u}bel estimator. The blue crosses give the posterior means, whereas the vertical blue line segments represent (pointwise) $95\%$ credible intervals.
  \label{fig:example2}}
\end{figure}


\subsection{Monte Carlo study}\label{subsec:mc}
For a more thorough comparison of the Buchmann-Gr\"{u}bel estimator and our Bayesian method, we performed a small Monte Carlo experiment. We considered two settings:
\begin{enumerate}
  \item The setting from Section \ref{subsec:uniform} with $n=250$. We took $m=\min(15, Z_{(n)})$.   
  \item The setting from Section \ref{subsec:geom} with $\alpha=1/3$. We took $m=\min(20, Z_{(n)})$.    
\end{enumerate}
In both cases we assumed $\Delta_i=1$ for all $1\le i \le n$. The number of Monte Carlo repetitions was taken equal to $50$. We took $400{,}000$ MCMC iterations and discarded the first half of these as burn-in. In Figure \ref{fig:mcresults} we give a graphical display of the results by means of boxplots of the errors. Here, as before, if the true values are denoted by $\nu_k$ and the estimate within a particular simulation run  by $\hat\nu_k$, the error is defined by  $\operatorname{Err}(\nu, \hat\nu) = \sum_{k=1}^{\infty} |\hat\nu_k - \nu_k|$ (we truncated the infinite summation to $50$). The results agree with our earlier findings, in that there is no clear ``winner'' in the comparison. Note that for the setting (ii) we considered both $c=2$ and $c=0.01$ in the prior specification. Both values give similar performance of the Bayesian method. This provides insight into sensitivity of our results with respect to the prior specification. A minor difference between the middle and righmost panel of Figure \ref{fig:mcresults} may be attributed to Monte Carlo error: the $50$ simulated datasets on which these panels are based are not the same.  Note that the prior promotes sparsity, and in that respect it is not surprising it does better when the true data-generating L\'evy measure is sparse. 
\begin{figure}
	\centering
	\captionsetup{width=0.8\textwidth, font=small}
		\includegraphics[trim=0.6cm 0cm 0cm 0cm, clip,width=0.8\textwidth]{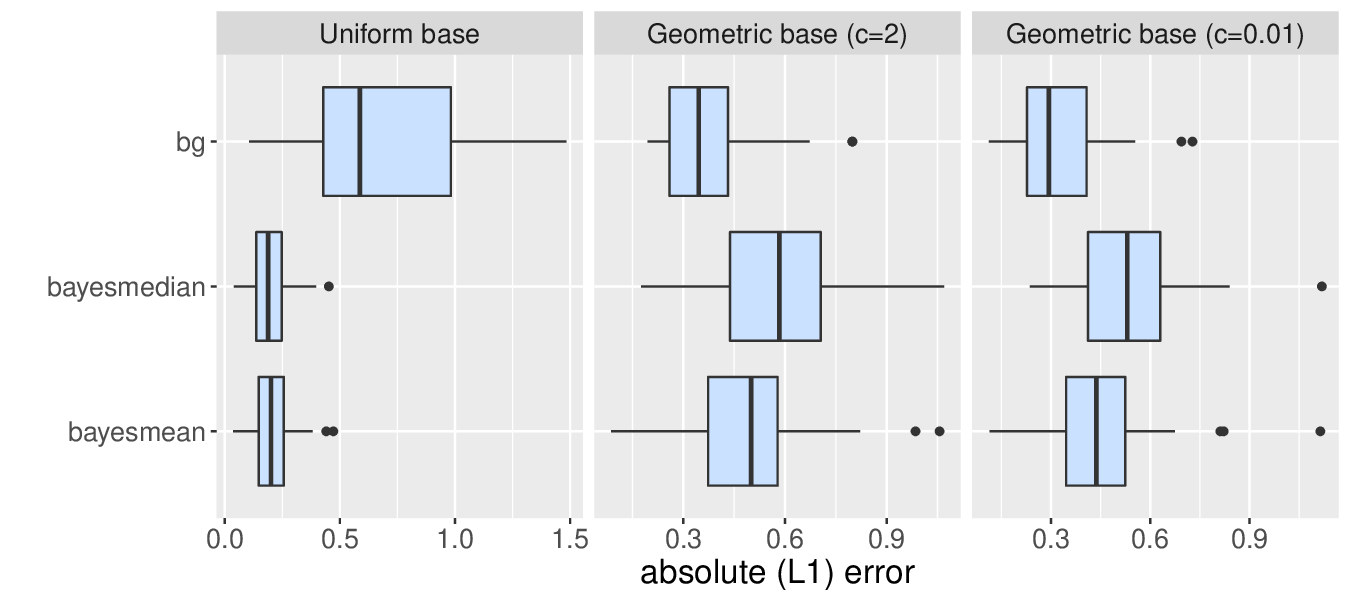}
	\caption{Monte Carlo study from Subsection \ref{subsec:mc} comparing the Buchmann-Gr\"{u}bel estimator and the Bayesian method proposed in this paper. In this figure ``bg'' refers to the Buchmann-Gr\"{u}bel estimator, while  ``bayesmedian'' and ``bayesmean'' refer to the Bayesian method, where either the median or mean was used as a point estimator for each $\nu_i$. The leftmost panel corresponds to the setting (i), whereas the other two panels to the setting (ii). In the latter we used both $c=2$ and $c=0.01$ in the prior specification.  \label{fig:mcresults}}
\end{figure}

\subsection{Computing time}
In terms of computational effort, the time it takes to evaluate the Buchmann-Gr\"{u}bel estimator is negligible compared to our algorithm for sampling from the posterior. This is not surprising, as that frequentist estimator relies on a plug-in approach, whereas in our case an approximation to the posterior is obtained by MCMC simulation. However, if proper uncertainty quantification is desired, then the Bayesian method is advantageous in the sense that it does not solely produce a point estimate.

Note that the proposed MCMC scheme requires determination of the solutions to the Diophantine equation $\sum_{j=1}^m j k_j = z$ for all unique values  $z$ in the observation set. For moderate values of $z$, say $z\le 30,$ this is rather quick, but for large values of $z$ the computing time increases exponentially, as does the amount of the allocated memory. The computing time of each Metropolis-Hastings step is then very small, but we potentially need a very large number of iterations to reach stationarity. The latter is caused firstly  by the fact that at a particular iteration, our proposals for $\mu_{ij}$ do not take  into account the current values of $\nu_1,\ldots, \nu_m$; secondly, the size of the state space that needs to be explored  increases exponentially with $m$. 

\section{Real data examples}\label{sec:realdata}
\subsection{Horse kick data}\label{subsec:horse}

To further illustrate our procedure, we will use the von Bortkewitsch data on the number of soldiers in the Prussian cavalry killed by horse kicks (available by year and by cavalry corps); this example was also employed in \cite{buchmann03}. Each observation is an integer from $0$ to $4$, giving the number of deaths for a given year and a given cavalry corps, with overall counts reported in Table \ref{table:bort}. The data are extracted from the table on p.~25 in \cite{bortkewitsch98}. Note that von Bortkewitsch corrects for the fact that the Guards and I, VI and XI cavalry corpses of the Prussian army had a different organisation from other units, and justifiably omits the corresponding counts from consideration.

It has been demonstrated by von Bortkewitsch that the Poisson distribution fits the horse kick data remarkably well. Assuming instead that observations follow a compound Poisson distribution is a stretch of imagination, as that would correspond to a horse running amok and killing possibly more than one soldier in one go. Nevertheless, this example provides a good sanity check for our statistical procedure.  

{\small
\begin{table}
	\begin{center}
		\captionsetup{width=0.8\textwidth, font=small}
		\caption{Data on the number of soldiers in the Prussian cavalry killed by horse kicks. See the table on p.~25 in \cite{bortkewitsch98}.}
		\begin{tabular}{lrrrrr}
			\toprule 
			{Deaths} & { $0$} & { $1$} & { $2$} 
			& { $3$} & { $4$}  \\
			\midrule
			{Counts}     & $109$  & $65$  & $22$   & $3$ & $1$ \\
			\bottomrule
		\end{tabular}
		\label{table:bort}
	\end{center}
\end{table}
}

The estimation results are graphically depicted in Figure \ref{fig:horsekick}. Clearly, point estimates of both methods are in agreement and lend support to the Poisson model for this dataset. 

\begin{figure}
	\centering
	\captionsetup{width=0.8\textwidth, font=small}
		\includegraphics[trim=0.21cm 0cm 0cm 0cm, clip,width=0.8\textwidth]{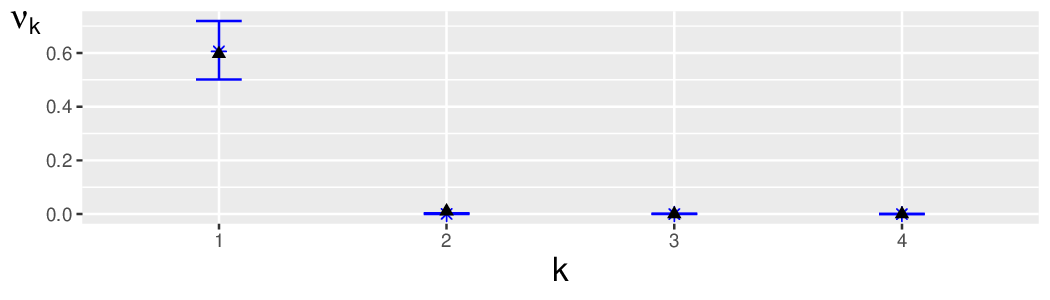}
	\caption{Estimation for the horse kick data from Subsection \ref{subsec:horse}. The horizontal axis gives the magnitudes of $\nu_k$, $k\in 
	\{1,\ldots, 4\}$. The black triangles denote the Buchmann-Gr\"{u}bel estimator, the blue crosses give the posterior means, whereas the vertical blue line segments represent (pointwise) $95\%$ credible intervals.
  \label{fig:horsekick}}
\end{figure}

\subsection{Plant data}\label{subsec:plant}

Our second real example is the one used in \cite{buchmann04}. Consider the data in Table \ref{table:plant}, taken from \cite{evans53}. The data were collected as follows: the area was divided into plots of equal size and in each plot the number of plants was counted; the number of plants in each plot ranges from $0$ to $12$. The second row of Table \ref{table:plant} gives the counts of plots containing a given number of plants; thus, there were $274$ plots that contained no plant, $71$ that contained $1$ plant, etc. It is customary in the ecological literature to model such count data as i.i.d.\ realizations from a compound Poisson distribution. Thus, e.g., \cite{neyman39} advocated the use of a Poisson base distribution in this context; another option here is a geometric base distribution. Given existence of several distinct modelling possibilities, performing an exploratory non-parametric  analysis appears to be a sensible strategy.

The estimation results are graphically depicted in Figure \ref{fig:plant}. There are some small differences between the posterior mean and the Buchmann-Gr\"ubel estimate, but overall they are very similar. 
\begin{table}
	\begin{center}
		\captionsetup{width=0.8\textwidth, font=small}
		\caption{Plant population data from \cite{evans53}.}
		\begin{tabular}{lrrrrrrrrrrrrr}
			\toprule 
			{Plants} & $0$ & $1$ & $2$ 
			& $3$ & $4$ & $5$ & $6$ & $7$ & $8$ & $9$ & $10$ & $11$ & $12$  \\
			\midrule
			{Counts}     & $274$  & $71$  & $58$   & $36$ & $20$ & $12$ & $10$ & $7$ & $6$ & $3$ & $0$ & $2$ & $1$ \\
			\bottomrule
		\end{tabular}
		\label{table:plant}
	\end{center}
\end{table}

\begin{figure}
	\centering
	\captionsetup{width=0.8\textwidth, font=small}
	\includegraphics[trim=0.21cm 0cm 0cm 0cm, clip,width=0.8\textwidth]{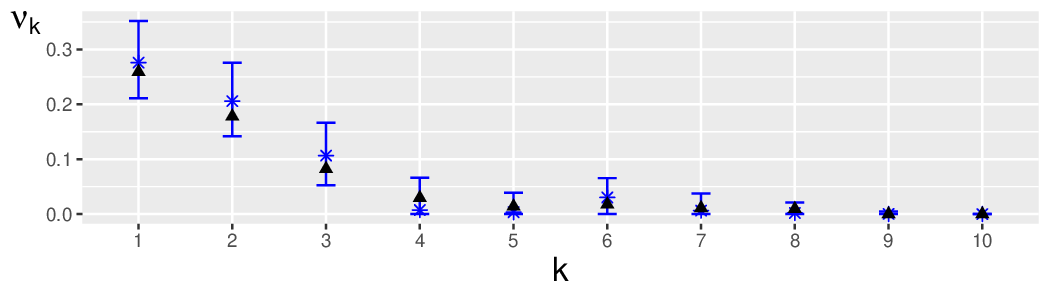}
	\caption{Estimation results for the plant data from Subsection \ref{subsec:plant}. The horizontal axis gives the magnitudes of $\nu_k$, $k\in 
	\{1,\ldots, 10\}$. The black triangles denote the Buchmann-Gr\"{u}bel estimates, the blue crosses give the posterior means, whereas the vertical blue line segments represent (pointwise) $95\%$ credible intervals.
  \label{fig:plant}}
\end{figure}

\section{Frequentist asymptotics}
\label{sec:asymptotics}

In this section we assume that the observation times $\{t_i\}$ are equidistant: $t_i = i, i=1,\ldots,n$. To evaluate our Bayesian method from a theoretical point of view, we will verify that it is consistent, and we will establish the rate at which the posterior contracts around the `true', data-generating L\'evy measure $\nu_0$; see \cite{ghosal17} for a thorough treatment of Bayesian asymptotics from the frequentist point of view. From now on the subscript $0$ in various quantities will refer to the data-generating distribution.

Our strategy consists in proving that the posterior contraction rate for $\nu_0$, given the sample $\mathcal{Z}_n=(Z_1,\ldots, Z_n)$, can be derived from the posterior contraction rate for $q_0$ given $\mathcal{Z}_n$, which is mathematically easier since $Z_1,\ldots, Z_n$ is a sequence of independent and identically distributed random variables with distribution $q_0$. We therefore effectively avoid dealing directly with the inverse nature of the problem of estimating $p_0$.

The  prior we consider in this section is defined  as follows:
\begin{itemize}
  \item Endow the rate $\lambda$ of the Poisson process with a prior distribution.
  \item Independently, endow the vector $(p_1,\ldots, p_{m})$ with a Dirichlet distribution with parameter $(\alpha_1,\ldots,\alpha_m)$.
  \item Set a priori $p_k=0$ for all $k > m$. 
\end{itemize}
This is a somewhat simplified version of the prior we used in Section \ref{sec:algorithm}, which allows us to concentrate on essential features of the problem, without need to clutter the analysis with extra and unenlightening technicalities. Also remember the well-known relationship between the Gamma and Dirichlet distributions: if $\xi_1,\ldots,\xi_m$ are independent Gamma distributed random variables, $\xi_i \sim \operatorname{Gamma}(\alpha_i,1)$, then for $\eta_i = \xi_i / \sum_{j=1}^m \xi_j$, the vector $(\eta_1,\ldots,\eta_m)$ follows the Dirichlet distribution with parameter $(\alpha_1,\ldots,\alpha_m)$; furthermore, we have that $\sum_{j=1}^m \xi_j \sim \operatorname{Gamma}\left(\sum_{j=1}^m \alpha_j,1\right)$, and $\eta_i$ are independent of $\sum_{j=1}^m \xi_j$.

In our asymptotic setting, we will make $m=m_n$ dependent on $n$ and let $m_n\rightarrow\infty$ at a suitable rate as $n\rightarrow\infty$.

Recall that we write $Q=(q_k)_{k\in\mathbb{N}_0}$ for $q_k=Q(\{k\})$. Let $\scr{Q}$ denote the collection of all probability measures supported on $\mathbb{N}$.
\begin{thm}\label{thm:postcontractionrate}
	Suppose there exists  $\underline\alpha$, such that $0<\underline{\alpha} \le \alpha_i \le 1$ for all $1\le i \le m_n$. Suppose $\lambda \sim \operatorname{Gamma}(a,b)$ with $a\in (0,1]$ and that $\nu_0$ has a compact support. 
Then, for any $\gamma >1$,
\[  \Pi_n\left( \| \nu - \nu_0 \|_1 \geq \frac{\log^{\gamma} n}{\sqrt{n}} \,\middle\vert\, \mathcal{Z}_n \right) \to 0\] in $Q_0^n$-probability, as $n\to \infty$.
\end{thm}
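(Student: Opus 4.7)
The overall strategy is dictated by the authors' remark that the rate for $\nu$ should be derived from the rate for $q$: since $Z_1,\dots,Z_n$ are i.i.d.\ with common law $q_0$, I would apply the standard posterior contraction machinery of Ghosal and van der Vaart (Theorem 8.9/8.11 in their 2017 monograph) to the induced prior on $q$ to get $\Pi_n(\|q-q_0\|_1 \ge \varepsilon_n\mid\mathcal{Z}_n)\to 0$ in $Q_0^n$-probability for $\varepsilon_n=(\log n)^{\gamma}/\sqrt n$, and then transfer this to $\nu$ through a deterministic Lipschitz bound for the decompounding (inverse Panjer) map. The three classical ingredients to check are a prior-mass lower bound around $q_0$ in KL-divergence, existence of sieves $\scr{Q}_n$ with controlled $\ell^1$-entropy, and a small prior mass outside the sieves.

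For the prior mass, I would exploit the Lipschitz continuity of the compounding map, $\|q_{\lambda,p}-q_{\lambda_0,p_0}\|_1 \lesssim |\lambda-\lambda_0|+\lambda_0\|p-p_0\|_1$. Together with the compact support of $\nu_0$ (so that $\operatorname{supp}(p_0)\subseteq\{1,\dots,m_n\}$ for $n$ large), the positive Gamma density of $\lambda$ at $\lambda_0$, and the Dirichlet small-ball estimate $\Pi\{\|p-p_0\|_1\le\eta\}\gtrsim \eta^{c\,m_n}$, one obtains an $\ell^1$-small-ball bound at $q_0$ of order $\exp(-C m_n\log n)$. Since $q_0(\{0\})=e^{-\lambda_0}>0$ is bounded away from zero, and on the sieve we will have $q(\{0\})\ge e^{-L_n}$, the standard inequality translating $\ell^1$- to KL/$V$-closeness (Lemma B.1 in Ghosal-Ghosh-van der Vaart 2000) converts this into the KL prior-mass condition with only an extra $\log n$ loss. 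Choosing $m_n$ polylogarithmic in $n$ ensures $m_n\log n \ll n\varepsilon_n^2$. As sieves I would take the image of $\{(\lambda,p):\lambda\le L_n=\kappa\log n,\;\operatorname{supp}(p)\subseteq\{1,\dots,m_n\}\}$ under compounding; its $\ell^1$-covering number is at most of order $(1/\varepsilon)^{m_n}$, giving entropy $\lesssim m_n\log(1/\varepsilon)$, and the Gamma tail of $\lambda$ together with normalization kills the complement.

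The transfer step is where I expect the main technical work. On the sieve we have $\lambda\le L_n$, hence $q(\{0\})\ge e^{-L_n}$, so the inverse Panjer formulas $\lambda=-\log q(\{0\})$ and $p_k=-q_k/(q(\{0\})\log q(\{0\}))-\tfrac{1}{k\,q(\{0\})}\sum_{j<k} j\, p_j\, q_{k-j}$ give, by induction on $k$, a bound
\[
\|\nu-\nu_0\|_1\le C_n\,\|q-q_0\|_1
\]
where $C_n$ depends only on $\lambda_0$, $L_n$, and the truncation level. The main obstacle is keeping $C_n$ of polylogarithmic order: the constant inflates multiplicatively with $\lambda$, with $1/q(\{0\})\le e^{L_n}$, and with the cardinality of the support of $\nu$, so $L_n$ and $m_n$ must be tuned so the induced loss is absorbed into the $(\log n)^{\gamma}$ factor without degrading the $\sqrt n$ rate. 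I would isolate this inversion estimate as a deterministic appendix lemma, then combine it with the contraction statement for $q$ and the event $\{q\in\scr{Q}_n\}$, whose posterior probability tends to one by the standard sieve argument, to conclude the theorem.
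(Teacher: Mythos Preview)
Your overall architecture --- establish posterior contraction for $q$ via the Ghosal--Ghosh--van der Vaart machinery, then push it through to $\nu$ by a stability estimate for the decompounding map --- is precisely the paper's route. The verification of the entropy, remaining-mass and prior-mass conditions also runs along similar lines (the paper works in the Hellinger metric and transfers KL/$V$ bounds from $(\lambda,p)$ to $q$ via Lemma~\ref{KLV}, rather than doing the $\ell^1$-to-KL conversion at the level of $q$ as you suggest, but this is bookkeeping).

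The genuine gap is in your transfer step, and it is exactly the obstacle you flag yourself without resolving. You propose to extract the Lipschitz constant $C_n$ from the inverse Panjer recursion on the sieve, using only the sieve bound $q(\{0\})\ge e^{-L_n}$. Since the recursion divides repeatedly by $q(\{0\})$, this forces $C_n$ to be at least of order $e^{L_n}$. But the remaining-mass condition for a $\operatorname{Gamma}(a,b)$ prior requires $e^{-bL_n}\lesssim e^{-cn\varepsilon_n^2}$, i.e.\ $L_n\gtrsim(\log n)^{2\gamma}$ (the paper takes $\overline\Lambda_n\asymp(\log n)^{2\gamma}$); your choice $L_n=\kappa\log n$ already fails this, and any admissible $L_n$ makes $e^{L_n}$ super-polynomial in $n$. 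Even with $L_n\asymp\log n$ one would get $C_n\sim n^{\kappa}$, which cannot be absorbed into the $(\log n)^{\gamma}$ slack without destroying the $1/\sqrt n$ rate. So the tuning programme you sketch cannot close.

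The paper sidesteps this entirely. Its stability estimate (Lemma~\ref{lemma:tv}, drawn from an inversion formula of Buchmann and Gr\"ubel) reads
\[
\|\nu-\nu_0\|_1 \;\le\; \frac{\|r_0\|_1\,\|q-q_0\|_1}{1-\|r_0\|_1\,\|q-q_0\|_1},\qquad r_0\coloneqq q_0^{*(-1)}=\exp\bigl(-\lambda_0(p_0-\delta_0)\bigr),
\]
valid whenever $\|q-q_0\|_1<\|r_0\|_1^{-1}$. The constant $\|r_0\|_1$ depends only on the \emph{true} $\nu_0$ and is therefore fixed once and for all; no sieve bound on $\lambda$ enters. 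Proposition~\ref{prop:contr} then gives directly
\[
\Pi\bigl(\|\nu-\nu_0\|_1\ge\varepsilon\,\big|\,\mathcal Z_n\bigr)\;\le\; 2\,\Pi\bigl(\|q-q_0\|_1\ge \varepsilon/(2\|r_0\|_1)\,\big|\,\mathcal Z_n\bigr),
\]
with no $n$-dependent factor and no need to intersect with the event $\{q\in\scr Q_n\}$. Your inductive Panjer argument could in principle be repaired in the same spirit --- restrict first to a \emph{fixed} $\ell^1$-ball around $q_0$, so that $q(\{0\})$ is bounded below by a constant depending only on $\lambda_0$ rather than on $L_n$ --- but the series expansion $\sum_{j\ge1}j^{-1}\bigl(r_0*(q_0-q)\bigr)^{*j}$ used in the paper packages this far more cleanly and without any dependence on the truncation level.
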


\begin{rem}
	Note that since the support of $\nu_0$ is not assumed to be known, our CPP model is still naturally non-parametric. The assumption of the compact support of $\nu_0$ does not cover the simulation example of Section \ref{subsec:geom}. However, its relaxation appears to pose very difficult technical challenges and is not attempted in this work.  
\end{rem}

The remainder of this section is devoted to the proof of Theorem \ref{thm:postcontractionrate}.

\subsection{Basic posterior inequality via the stability estimate}
A key step of the proof of Theorem~\ref{thm:postcontractionrate} is the stability estimate in Equation \eqref{eq:tv} below, which bounds the total variation distance between the L\'evy measures $\nu, \nu^{\prime}$ in terms of the total variation distance between the corresponding compound distributions $q,q^{\prime}$.

In principle, it is conceivable that the Panjer recursion should allow one to bound probability distances between $P$-probabilities via distances between $Q$-probabilities; we call such a bound a stability estimate. Nevertheless, explicit as the equations of the Panjer recursion are, they are still somewhat unwieldy for that purpose. Hence we will use another inversion formula from \cite{buchmann03}, that will lead to the stability estimate we are after.

First we introduce some notation, and also recall a few useful facts summarised in \cite{buchmann03}. The space of absolutely summable sequences is defined as
$\ell_1 \coloneqq \left\{ a\in\rr^{\mathbb{N}_0}: \sum_{j=0}^{\infty} |a_j| < \infty \right\}$,
with a norm given by
$\|a\|_1 = \sum_{j=0}^{\infty} |a_j|$.
For probability vectors $a,b$, the norm $\|a-b\|_1$ is (twice) the total variation distance between $a$ and $b$. For any $a,b\in\ell_1$, we have the inequality
\begin{equation}
\label{eq:ineq:conv}
\|a\ast b\|_1 \leq \|a\|_1 \|b\|_1,
\end{equation}
where $\ast$ denotes convolution of $a$ and $b.$ We define a mapping $a\mapsto\exp(a)$ from $\ell_1$ into $\ell_1$ via
\[
\exp(a)=\sum_{j=0}^{\infty} \frac{a^{\ast j}}{j!}.
\]
The exponential has the following two useful properties:
\[
\exp(a+b)=\exp(a)\ast\exp(b), \quad a,b\in\ell_1,
\]
and
\[
\exp(a)=\exp(b) \Longrightarrow a=b, \quad a,b\in\ell_1.
\]
We define a sequence $\delta_0=(\delta_{0,k})_{k\in\mathbb{N}_0}$, such that $\delta_{0,0}=1$ and its all other entries are equal to zero. Then, using the above properties of the exponential, we can write  concisely the compounding mapping in \eqref{eq:comp} in terms of convolutions of infinite sequences: $q=\exp(\lambda(p-\delta_0))$. Its convolution inverse, i.e.\ $q^{\ast(-1)}$ such that $q^{\ast(-1)} \ast q = \delta_0$, is given by $r=q^{\ast(-1)}=\exp(-\lambda(p-\delta_0))$. Note that $r\in\ell_1$. We have the following recursive expressions
\[
r_0 = \frac{1}{q_0}, \quad r_k = - \frac{1}{q_0} \sum_{j=1}^k q_j r_{k-j}, \quad k\in\mathbb{N}.
\]

\begin{lemma}\label{lemma:tv}
Let $q,q^{\prime}$ correspond to two pairs $(\lambda,p)$ and $(\lambda^{\prime},p^{\prime})$, respectively (and $r$ correspond to $q$, i.e.\ the pair $(\lambda,p)$). 
Then, in accordance with the notation introduced above and provided that $\|q^{\prime}-q\|_1<\|r\|_1^{-1}$, it holds that
\begin{equation}
\label{eq:tv}
\| \nu^{\prime} - \nu \|_1 = \| \lambda^{\prime}p^{\prime} - \lambda p \|_1 \leq \frac{\|r\|_1 \| q^{\prime} - q \|_1}{1 - \|r\|_1 \| q^{\prime} - q \|_1 }.
\end{equation}
\end{lemma}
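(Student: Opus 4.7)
The plan is to package both $q=\exp(\nu-\lambda\delta_0)$ (extending $\nu$ to $\mathbb{N}_0$ by setting $\nu_0=0$, so that $\nu=\lambda p$ as elements of $\ell_1$) and its convolution inverse $r=\exp(-\nu+\lambda\delta_0)$ using the machinery of the Banach algebra $(\ell_1,\ast)$ recalled above, and then to extract the difference of L\'evy measures by convolving $q'$ with $r$. A direct application of $\exp(a+b)=\exp(a)\ast\exp(b)$ gives
\[
q'\ast r=\exp\bigl((\nu'-\nu)+(\lambda-\lambda')\delta_0\bigr)=:\exp(\mu),
\]
where $\mu\in\ell_1$ satisfies $\mu_0=\lambda-\lambda'$ and $\mu_k=\nu'_k-\nu_k$ for $k\geq 1$. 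In particular $\|\nu'-\nu\|_1\leq\|\mu\|_1$, so it suffices to control $\|\mu\|_1$.

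The key observation is that $q'\ast r-\delta_0=(q'-q)\ast r=:s$, since $q\ast r=\delta_0$. By \eqref{eq:ineq:conv} the hypothesis $\|q'-q\|_1<\|r\|_1^{-1}$ yields $\|s\|_1\leq\|r\|_1\|q'-q\|_1<1$. Under this strict inequality the logarithm series
\[
\log(\delta_0+s):=\sum_{j\geq 1}\frac{(-1)^{j+1}}{j}\,s^{\ast j}
\]
converges absolutely in $\ell_1$, and a standard Cauchy-product manipulation of formal series (justified in $\ell_1$ by absolute convergence) gives $\exp(\log(\delta_0+s))=\delta_0+s=\exp(\mu)$. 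The injectivity of $\exp$ on $\ell_1$ recalled above then forces $\mu=\log(\delta_0+s)$.

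From this identification the bound is routine: applying \eqref{eq:ineq:conv} term-by-term to the logarithm series,
\[
\|\mu\|_1\leq\sum_{j=1}^\infty\frac{\|s\|_1^j}{j}=-\log(1-\|s\|_1)\leq\frac{\|s\|_1}{1-\|s\|_1},
\]
where the last inequality is a standard comparison between $\sum_{j\geq 1}x^j/j$ and $\sum_{j\geq 1}x^j$ for $x\in[0,1)$. Since $t\mapsto t/(1-t)$ is increasing on $[0,1)$, substituting the upper bound $\|s\|_1\leq\|r\|_1\|q'-q\|_1$ yields precisely \eqref{eq:tv}.

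The only genuine subtlety I foresee is the identification $\mu=\log(\delta_0+s)$: what makes it go through cleanly is the combination of (a) the strict inequality $\|s\|_1<1$ guaranteeing convergence of the logarithm series, and (b) the injectivity of the $\ell_1$-exponential recalled from \cite{buchmann03} just before the statement. Everything else is a two-line convolution identity for $q'\ast r$ together with the triangle inequality in $\ell_1$.
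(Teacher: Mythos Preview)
Your proof is correct and takes essentially the same approach as the paper. The paper invokes Lemma~3 of \cite{buchmann03}, which asserts exactly the series identity $(\lambda'-\lambda)\delta_0+\lambda p-\lambda'p'=\sum_{j\ge1}\frac1j(r\ast(q-q'))^{\ast j}$; in your notation this is $-\mu=-\log(\delta_0+s)$, so you have simply rederived that lemma from scratch via the $\ell_1$-logarithm and the injectivity of $\exp$, after which the $\|\cdot\|_1$-bound is identical.
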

\begin{proof}
The result is a direct consequence of Lemma 3 in \cite{buchmann03}, which states that 
\[
(\lambda^{\prime}-\lambda)\delta_0 + \lambda p - \lambda^{\prime}p^{\prime} = \sum_{j=1}^{\infty} \frac{1}{j}(r\ast(q-q^{\prime}))^{\ast j}
\]
whenever $\|q^{\prime}-q\|_1<\|r\|_1^{-1}$.
Taking the $\|\cdot\|_{1}$-norm on both sides and some elementary bounding via \eqref{eq:ineq:conv} imply that
$$|\lambda^{\prime}-\lambda|+ \| \lambda^{\prime}p^{\prime} - \lambda p \|_1 \leq \frac{\|r\|_1 \| q^{\prime} - q \|_1}{1 - \|r\|_1 \| q^{\prime} - q \|_1 },$$
and thus Equation \eqref{eq:tv} follows.
\end{proof}
We will use Equation \eqref{eq:tv} to establish the key inequality for the posterior measure $\Pi(\cdot \mid \mathcal{Z}_n)$. We recall once again that the subscript $0$ refers to `true', data-generating quantities.

\begin{prop}\label{prop:contr}
For any prior $\Pi$ on $\nu$, for any $\varepsilon\in (0,1] $ and for any $n\geq 1$, the following posterior inequality holds:
$$\Pi\left( \| \nu - \nu_0 \|_1 \geq \varepsilon \,\middle\vert\, \mathcal{Z}_n \right)\leq 2 \Pi \left( \| q - q_0 \|_1 \geq \frac{\varepsilon}{2\|r_0\|_1} \,\middle\vert\,  \mathcal{Z}_n \right).$$
\end{prop}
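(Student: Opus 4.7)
The plan is to deduce the posterior inequality purely from the deterministic stability estimate of Lemma \ref{lemma:tv}, applied with the data-generating pair $(\lambda_0,p_0)$ playing the role of the reference point. Once we show the set inclusion
\[
\bigl\{ \|\nu-\nu_0\|_1 \geq \varepsilon \bigr\} \subseteq \Bigl\{ \|q-q_0\|_1 \geq \tfrac{\varepsilon}{2\|r_0\|_1} \Bigr\}
\]
in the parameter space, the posterior bound follows by monotonicity of $\Pi(\cdot\mid \mathcal{Z}_n)$ (with the factor $2$ on the right‑hand side providing harmless slack).

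First I would apply Lemma \ref{lemma:tv} with the ``unprimed'' pair set to the data‑generating $(\lambda_0,p_0)$, so that the inverse sequence appearing in the bound is precisely $r_0$. The lemma then reads: whenever $\|q-q_0\|_1 < \|r_0\|_1^{-1}$,
\[
\|\nu-\nu_0\|_1 \leq \frac{\|r_0\|_1\,\|q-q_0\|_1}{1-\|r_0\|_1\,\|q-q_0\|_1}.
\]
Next I would run a short contrapositive. Assume $\|q-q_0\|_1 < \varepsilon/(2\|r_0\|_1)$. Since $\varepsilon \leq 1$, this forces $\|r_0\|_1\,\|q-q_0\|_1 < 1/2$, which both validates the stability estimate and bounds the denominator below by $1/2$. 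Plugging in gives
\[
\|\nu-\nu_0\|_1 \leq 2\|r_0\|_1\,\|q-q_0\|_1 < \varepsilon,
\]
establishing the claimed set inclusion. Integrating the indicator of $\{\|\nu-\nu_0\|_1\geq\varepsilon\}$ against the posterior and using that this event is contained in $\{\|q-q_0\|_1 \geq \varepsilon/(2\|r_0\|_1)\}$ yields the stated posterior inequality (with an extra factor of $2$ to spare).

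There is essentially no real obstacle here: the step is deterministic and the heavy lifting has already been done in Lemma \ref{lemma:tv}. The only thing to be careful about is that the stability estimate is only valid in the regime $\|q-q_0\|_1 < \|r_0\|_1^{-1}$, and the restriction $\varepsilon \in (0,1]$ is precisely what guarantees that the event $\{\|q-q_0\|_1 < \varepsilon/(2\|r_0\|_1)\}$ lies within this regime, so that no separate case analysis is needed. The factor of $2$ appearing in the denominator of $\varepsilon/(2\|r_0\|_1)$ on the right‑hand side is exactly the slack produced when one replaces $1-\|r_0\|_1\|q-q_0\|_1$ by $1/2$.
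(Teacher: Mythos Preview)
Your argument is correct and matches the paper's approach: both establish the set inclusion $\{\|\nu-\nu_0\|_1\ge\varepsilon\}\subseteq\{\|q-q_0\|_1\ge \varepsilon/(2\|r_0\|_1)\}$ via Lemma~\ref{lemma:tv} together with the constraint $\varepsilon\le 1$, then pass to posterior probabilities. The only cosmetic difference is that the paper splits into the two cases $\|r_0\|_1\|q-q_0\|_1<1/2$ and $\ge 1/2$, whereas your contrapositive handles both at once; in either version the factor $2$ in the proposition is indeed harmless slack.
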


\begin{proof}
Write $\{ \nu: \| \nu - \nu_0 \|_1 \geq \varepsilon \}$ as a union of the sets
\[\left\{ \nu: \| \nu - \nu_0 \|_1 \geq \varepsilon \right\} \cap \left\{ \nu: \|r_0\|_1 \| q - q_0 \|_1 < 1/2 \right\}\]
and
\[ \left\{ \nu: \| \nu - \nu_0 \|_1 \geq \varepsilon \right\} \cap \left\{ \nu: \|r_0\|_1 \| q - q_0 \|_1 \geq  1/2 \right\}.\]
Thanks to Lemma \ref{lemma:tv}, the set
\[
\left\{ \nu: \| \nu - \nu_0 \|_1 \geq \varepsilon \right\} \cap \left\{ \nu: \|r_0\|_1 \| q - q_0 \|_1 < 1/2 \right\}
\]
is a subset of $\{ \nu: \| q - q_0 \|_1 \geq {\varepsilon}/(2\|r_0\|_1) \}$. The proof is concluded by observing that $\left\{ \nu: \| \nu - \nu_0 \|_1 \geq \varepsilon \right\} \cap \left\{ \nu: \|r_0\|_1 \| q - q_0 \|_1 \geq  1/2 \right\}$ is a subset of $\{ \nu: \| q - q_0 \|_1 \geq {\varepsilon}/(2\|r_0\|_1) \}$, too, since $\varepsilon\leq 1$. 
\end{proof}
In general, stability estimates like the one in Equation \eqref{eq:tv} are unknown in the literature on L\'evy processes. Consequently, studying Bayesian asymptotics for L\'evy models, even in the CPP case, necessitates the use of very intricate arguments under restrictive assumptions; see, e.g., \cite{nickl17}.

\subsection{Proof of Theorem \ref{thm:postcontractionrate}}
The usefulness of Proposition \ref{prop:contr} above lies in the fact that the posterior contraction rate in the inverse problem of estimating the L\'evy measure $\nu_0$ from indirect observations $\mathcal{Z}_n$ can be now deduced from the posterior contraction rate in the direct problem of estimating the compound distribution $q_0$, which is easier (observe that $r_0$ is determined by $\nu_0$ and is therefore fixed in the proofs). The general machinery developed in \cite{ghosal00} can be applied to handle the latter, and also several inequalities obtained in \cite{gugu15} are useful in that respect.
In particular, we make use of the following inequality for the Hellinger distance, 
\begin{equation}
\label{eq:HL}
h(q_{\la, p}, q_{\la^{\prime}, p^{\prime}}) \le \sqrt{\lambda} h(p,p^{\prime}) +|\sqrt{\lambda} - \sqrt{\lambda^{\prime}}|,
\end{equation}
Cf.\ Lemma 1 in \cite{gugu15}. 
To ease our notation, in the sequel we will often write  $q$ and $q^{\prime}$ instead of $q_{\lambda, p}$ and $q_{\lambda^{\prime}, p^{\prime}},$ respectively.

Denote
\[ KL(q_0,q) = Q_0\left(\log \frac{q_0}{q}\right), \quad V(q_0,q) = Q_0\left(\log \frac{q_0}{q}\right)^2. \]
Another two inequalities we will use are the following: let $\lambda, \lambda_0 \in [\underline{\lambda}, \overline{\lambda}].$ Then there exists a positive constant $\overline{C}$, such that 
\begin{equation}\label{eq:transfer-qtolambdaandp}
\begin{split}  KL(q_0, q) \le \overline{C}\left(KL(p_0,p) +|\lambda_0- \lambda|^2\right), \\
 V(q_0, q) \le \overline{C}\left(V(p_0,p)+ KL(p_0,p) +|\lambda_0- \lambda|^2\right); \end{split} 
 \end{equation}
cf.~equations (14) and (15) in Lemma 1 in \cite{gugu15}.

These three inequalities can be obtained by adjustment of the arguments used in \cite{gugu15}. However, we opted to give their direct proofs in Lemma \ref{KLV} from Appendix ~\ref{app:lemmas} under slightly weaker conditions than required in \cite{gugu15}.

Our proof of Theorem \ref{thm:postcontractionrate} proceeds via verification of the conditions for posterior contraction in Theorem 2.1 in \cite{ghosal00}.  In our setting, the latter result reads  as follows.
\begin{thm}\label{thm:ggvdv}
Assume $\scr{Z}_n =(Z_1,\ldots, Z_n),$ where $Z_1,\ldots, Z_n$ are independent and identically distributed with distribution $q_0$. Let $h$ denote the Hellinger metric on $\scr{Q}$, a collection of all measures with support in $\mathbb{N}$. 
Suppose that for a sequence $\{\eps_n\}$ with $\eps_n \to 0$ and $n\eps_n^2\to \infty$, a constant $C>0$ and sets $\scr{Q}_n\subset \scr{Q}$, we have
\[ \log N(\eps_n, \scr{Q}_n, h) \le n\eps_n^2,\]
\[ \Pi_n(\scr{Q} \setminus \scr{Q}_n) \le \exp\left(-n\eps_n^2(C+4)\right), \]
\[ \Pi_n\left( q\colon KL(q_0,q) \le \eps_n^2,\, V(q_0,q) \le \eps_n^2\right) \ge \exp\left(-C n\eps_n^2\right). \]
Then, for sufficiently large $M>0$, we have that $\Pi_n \left(Q\colon h(q,q_0)\ge M\eps_n \mid \scr{Z}_n \right) \to 0$ in $Q_0^n$-probability. 
\end{thm}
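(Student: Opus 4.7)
The plan is to leverage the stability estimate of Proposition~\ref{prop:contr} to translate the contraction problem for $\nu_0$ into the easier direct i.i.d.\ problem of contraction for $q_0$, and then apply the general theorem of Ghosal--Ghosh--van der Vaart stated as Theorem~\ref{thm:ggvdv}. Since Theorem~\ref{thm:ggvdv} yields a Hellinger contraction rate while Proposition~\ref{prop:contr} takes an $\ell_1$ bound on $q-q_0$ as input, I will use the elementary inequality $\|q-q_0\|_1 \le 2\,h(q,q_0)$ (with the standard convention $h^2(p,q)=\sum_k(\sqrt{p_k}-\sqrt{q_k})^2$). Choosing the direct rate slightly faster than the target, namely $\eps_n = \log^{\gamma'}n/\sqrt n$ with any $\gamma'\in(1,\gamma)$, ensures that $2M\eps_n < \log^{\gamma} n/(2\sqrt n\,\|r_0\|_1)$ eventually, so that Proposition~\ref{prop:contr} delivers the claimed rate for $\|\nu-\nu_0\|_1$.

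For the sieves in Theorem~\ref{thm:ggvdv} I would take
\[
\scr{Q}_n = \bigl\{ q_{\lambda,p} : \lambda\in[\underline{\lambda}_n, \overline{\lambda}_n],\; p\text{ supported on }\{1,\ldots,m_n\}\bigr\},
\]
with $m_n \asymp \log n$, $\overline{\lambda}_n \asymp \log^{2\gamma'} n$, and $\underline{\lambda}_n = \exp(-K\log^{2\gamma'} n)$ for a suitable constant $K$. The compact support of $\nu_0$ guarantees that the support of $p_0$ is contained in $\{1,\ldots,m_n\}$ and that $\lambda_0\in[\underline{\lambda}_n,\overline{\lambda}_n]$ for all sufficiently large $n$, which is essential because otherwise the prior would not assign positive mass to neighborhoods of $(\lambda_0,p_0)$.

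The three hypotheses of Theorem~\ref{thm:ggvdv} are then verified as follows. The entropy condition uses the Hellinger inequality \eqref{eq:HL} to reduce a covering of $\scr{Q}_n$ to the product of a covering of the $m_n$-simplex and a covering of $[\underline{\lambda}_n,\overline{\lambda}_n]$, giving $\log N(\eps_n,\scr{Q}_n,h)\lesssim m_n\log n \asymp \log^2 n \le n\eps_n^2$ whenever $\gamma'\ge 1$. The remaining-mass condition reduces, since the Dirichlet part of the prior already lives on $\{1,\ldots,m_n\}$, to controlling the two Gamma tails of $\lambda$: for $\overline{\lambda}_n$ a standard sub-exponential bound suffices, while for $\underline{\lambda}_n$ the small-ball estimate $\Pi_n(\lambda\le t) \lesssim t^a$ (where $a\in(0,1]$ is used) yields the required $\exp(-\Omega(\log^{2\gamma'}n))$. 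For the prior-mass condition, the transfer inequalities \eqref{eq:transfer-qtolambdaandp} reduce matters to a product of a Gamma small-ball probability around $\lambda_0$ (trivially $\gtrsim \eps_n$) and a Dirichlet small-ball probability in Kullback--Leibler and variation distance around $p_0$.

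The main technical obstacle is this last Dirichlet small-ball estimate: because $p_0$ may have zero coordinates at indices in $\{1,\ldots,m_n\}$ that fall outside the support of $\nu_0$, the Dirichlet has to concentrate near $p_0$ on the true support and simultaneously push mass very close to zero elsewhere. The two-sided hypothesis $\underline{\alpha}\le\alpha_i\le 1$ is crucial here: the upper bound $\alpha_i\le 1$ prevents the Dirichlet density from vanishing on the zero coordinates, while the lower bound $\alpha_i\ge\underline\alpha$ keeps the exponents controlled from below. A direct evaluation of the Dirichlet density on an appropriate slice of the simplex should yield a lower bound of order $\exp(-C m_n\log(1/\eps_n)) = \exp(-O(\log^2 n))$, which is dominated by the required $\exp(-Cn\eps_n^2) = \exp(-C\log^{2\gamma'} n)$ as long as $\gamma'>1$. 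Assembling the three conditions, Theorem~\ref{thm:ggvdv} delivers $h$-contraction at rate $\eps_n$, and unwinding through $\|q-q_0\|_1\le 2h$ and Proposition~\ref{prop:contr} produces the claimed rate $\log^{\gamma}n/\sqrt n$ for $\|\nu-\nu_0\|_1$.
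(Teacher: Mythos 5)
You have proved the wrong statement. Theorem~\ref{thm:ggvdv} is the general Ghosal--Ghosh--van der Vaart posterior-contraction theorem (Theorem~2.1 of \cite{ghosal00}) specialised to the i.i.d.\ discrete setting of this paper. It is a conditional, model-agnostic statement: \emph{given} the entropy bound, the remaining-mass bound, and the prior-mass bound, the posterior contracts at rate $\eps_n$ in Hellinger distance. The paper does not prove it; it simply records it and cites the original reference. Your proposal, by contrast, opens with ``\ldots and then apply the general theorem of Ghosal--Ghosh--van der Vaart stated as Theorem~\ref{thm:ggvdv},'' and everything that follows --- the choice of sieves, the rate exponent $\gamma'\in(1,\gamma)$, the verification of the three conditions, and the transfer through Proposition~\ref{prop:contr} and $\|q-q_0\|_1\le 2h(q,q_0)$ --- is a proof of Theorem~\ref{thm:postcontractionrate}, the paper's \emph{application} of Theorem~\ref{thm:ggvdv}. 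As a proof of Theorem~\ref{thm:ggvdv} itself this is circular: you may not invoke the theorem you are asked to establish.

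A genuine proof of Theorem~\ref{thm:ggvdv} requires the GGvdV machinery, none of which appears in your proposal: (a) from the local entropy bound, construct Hellinger tests $\phi_n$ of $q_0$ against the complement of the $M\eps_n$-ball intersected with $\scr{Q}_n$, with type-I and type-II errors bounded by $e^{-c M^2 n\eps_n^2}$; (b) from the prior-mass condition, obtain (via the second moment/Chebyshev argument in Lemma~8.1 of \cite{ghosal00}) a lower bound of the form $\exp\left(-(C+2)n\eps_n^2\right)$, holding with $Q_0^n$-probability tending to one, for the normalising integral $\int \prod_i \frac{q(Z_i)}{q_0(Z_i)}\,\dd\Pi_n(q)$; and (c) split the posterior mass of $\{q: h(q,q_0)\ge M\eps_n\}$ into the part on $\scr{Q}_n$ (killed by the tests) and the part on $\scr{Q}\setminus\scr{Q}_n$ (killed by the remaining-mass condition together with the denominator lower bound). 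Your verification of the three hypotheses for the CPP prior is the right content for Theorem~\ref{thm:postcontractionrate}, but it does not address Theorem~\ref{thm:ggvdv}. Incidentally, your entropy estimate quotes only the $m_n\log n$ contribution; the paper's Lemma~\ref{lem:entropy} also produces a $\log(1/\underline{\Lambda}_n)\asymp \log^{2\gamma}n$ term, which is the one that actually saturates the budget $n\eps_n^2\asymp\log^{2\gamma}n$ --- worth keeping in mind if you rewrite this as a proof of Theorem~\ref{thm:postcontractionrate}.
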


We will now verify the three conditions of this theorem, which we refer to as the entropy condition, the remaining mass condition, and the prior mass condition, respectively.
To that end, 
fix strictly positive sequences $\{\underline\Lambda_n\}$, $\{\overline\Lambda_n\}$, and 
define the sieves
\[ \scr{Q}_n =\left\{ q_{\la,p} \colon \la \in [\underline{\Lambda}_n, \overline\Lambda_n],\: \operatorname{supp} p \subseteq \{1,\ldots,m_n\} \right\}. \]

\subsubsection{Entropy}
We start with bounding the entropy of the sieve $\scr{Q}_n$ for $h$-balls of radius $\eps_n$. 
\begin{lemma}
	\label{lem:entropy}
Assume that as $n\to\infty,$
\begin{equation}
\label{eq:nlarge}
m_n\rightarrow\infty, \quad \epsilon_n \rightarrow 0, \quad \underline{\Lambda}_n \rightarrow 0, \quad \overline{\Lambda}_n \rightarrow \infty.
\end{equation}
Then 
\begin{equation}\label{eq:entropy}
\log N(\epsilon_n, \scr{Q}_n, h) \lesssim m_n \left\{ \log (m_n) + \log (\overline{\Lambda}_n) +  \log\left( \frac{1}{\eps_n} \right) \right\} + \log \left(\frac{1}{\underline{\Lambda}_n} \right). \end{equation}
\end{lemma}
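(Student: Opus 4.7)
The plan is to construct an $\eps_n$-Hellinger net for $\scr{Q}_n$ as a Cartesian product of a net on the intensity interval $[\underline{\Lambda}_n,\overline{\Lambda}_n]$ and a net on the probability simplex of distributions supported on $\{1,\ldots,m_n\}$, with the two mesh sizes calibrated via the Hellinger inequality \eqref{eq:HL}. Given approximation levels $\delta_\lambda$ and $\delta_p$, for any $q_{\lambda,p}\in\scr{Q}_n$ one picks a product-net point $q_{\lambda',p'}$ with $|\lambda-\lambda'|\le\delta_\lambda$ and $h(p,p')\le\delta_p$. Applying \eqref{eq:HL} together with the elementary bound $|\sqrt\lambda-\sqrt{\lambda'}|\le|\lambda-\lambda'|/(2\sqrt{\underline{\Lambda}_n})$ gives
\[ h(q_{\lambda,p},q_{\lambda',p'})\le \sqrt{\overline{\Lambda}_n}\,\delta_p + \frac{\delta_\lambda}{2\sqrt{\underline{\Lambda}_n}}. \]
Choosing $\delta_p=\eps_n/(2\sqrt{\overline{\Lambda}_n})$ and $\delta_\lambda=\eps_n\sqrt{\underline{\Lambda}_n}$ makes the right-hand side $\le\eps_n$, as required.

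For the intensity factor, a uniform $\delta_\lambda$-grid on $[\underline{\Lambda}_n,\overline{\Lambda}_n]$ has cardinality $\lesssim 1+\overline{\Lambda}_n/(\eps_n\sqrt{\underline{\Lambda}_n})$, so its logarithm is $\lesssim \log\overline{\Lambda}_n+\log(1/\eps_n)+\tfrac12\log(1/\underline{\Lambda}_n)$. This is precisely where the $\log(1/\underline{\Lambda}_n)$ summand in \eqref{eq:entropy} enters; choosing a coarser mesh in $\sqrt{\lambda}$ would avoid this term but at the cost of a different organisation of the argument, so the stated form reflects the direct $\lambda$-parametrisation.

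For the simplex factor, I would reduce the Hellinger covering to an $\ell_1$ covering through the elementary inequality $h(p,p')^2\le\tfrac12\|p-p'\|_1$: it suffices to cover the simplex in $\ell_1$ at resolution $2\delta_p^2$. Such a cover is provided by coordinate-wise rounding to the lattice $\{0,1/K,\ldots,1\}^{m_n}$ with $K\asymp m_n/\delta_p^2$, with one coordinate adjusted to enforce the unit sum constraint; this incurs $\ell_1$-error $\lesssim m_n/K\le 2\delta_p^2$ and has cardinality at most $(K+1)^{m_n}\lesssim (m_n/\delta_p^2)^{m_n}$. Its logarithm is $\lesssim m_n[\log m_n+\log(1/\delta_p)] = m_n\bigl[\log m_n+\tfrac12\log\overline{\Lambda}_n+\log(1/\eps_n)\bigr]$, supplying the dominant $m_n[\log m_n+\log\overline{\Lambda}_n+\log(1/\eps_n)]$ contribution in \eqref{eq:entropy}.

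Summing the two log-cardinalities and absorbing numerical constants into $\lesssim$ yields \eqref{eq:entropy}. The main technical wrinkle, and the only place where there is anything beyond routine calibration, is the simplex covering step: specifically, checking that the naive coordinate-wise grid indeed $\ell_1$-approximates every point of the simplex at the claimed rate, which is the source of the dominant $m_n\log m_n$ term. Everything else reduces to a single-pass triangle-inequality-style bound based on \eqref{eq:HL}.
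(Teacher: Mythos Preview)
Your proposal is correct and follows essentially the same route as the paper's proof: both apply \eqref{eq:HL} together with the bound $|\sqrt\lambda-\sqrt{\lambda'}|\le|\lambda-\lambda'|/(2\sqrt{\underline{\Lambda}_n})$, then cover the intensity interval and the simplex separately with matching mesh sizes. The only cosmetic difference is that the paper covers the simplex via $L_\infty$-balls (using $h(p,p')\le\sqrt{\|p-p'\|_1}\le\sqrt{m_n\|p-p'\|_\infty}$ and a grid of spacing $\eps_n^2/(4\overline{\Lambda}_n m_n)$ in each coordinate), whereas you phrase the same grid as an $\ell_1$-cover; both give cardinality of order $(m_n\overline{\Lambda}_n/\eps_n^2)^{m_n}$ and hence the identical leading term.
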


\begin{proof}
For $\la, \la' \ge \underline{\Lambda}_n$,
\[ |\sqrt{\la}-\sqrt{\la'}| = \frac{|\la-\la'|}{\sqrt{\la}+\sqrt{\la'}} \le \frac1{2\sqrt{\underline{\Lambda}_n}} |\la -\la'|. \]
Furthermore, from Section 3.3 in \cite{pollard_2001},
\[   	h(p, p')   \leq \sqrt{ \| p - p^{\prime}\|_1 }  \leq \sqrt{ m_n \| p-p' \|_{\infty} }. \]
Combining the preceding two displays and Equation \eqref{eq:HL}, we get 
\[  h(q_{\la, p}, q_{\la', p'}) \le  \sqrt{\overline{\Lambda}_n m_n \|p-p'\|_{\infty}} + \frac1{2\sqrt{\underline{\Lambda}_n}} |\la -\la'|. \]
Hence, if
\[ \|p-p'\|_{\infty} \le \frac{\eps_n^2}{4 \overline{ \Lambda}_n m_n}, \quad  |\la-\la'| \le \sqrt{\underline{\Lambda}_n} \eps_n, \]
then  the Hellinger distance between $q_{\la, p}$ and $q_{\la', p'}$ is bounded by $\eps_n$. 
To cover $[\underline{\Lambda}_n, \overline\Lambda_n]$, we need at most $\lfloor \frac{\overline{\Lambda}_n}{2\eps_n \sqrt{\underline{\Lambda}_n}}\rfloor +1$ intervals of length $2 \sqrt{\underline{\Lambda}_n} \eps_n$.
To cover discrete distributions with support in $\{1,\ldots,m_n\}$, we need at most 
\[ \left(  \left\lfloor \frac{2\overline{\Lambda}_n m_n }{\eps_n^2} \right\rfloor +1  \right)^{m_n}\]
$L_{\infty}$-balls of radius $\eps_n^2 / (4\overline{\Lambda}_n m_n)$. Under assumption \eqref{eq:nlarge}, the summand $1$ in the above display is asymptotically negligible and can be omitted. In that case, the number of $h$-balls that we need to cover $\scr{Q}_n$ is of order
\[ \left( \frac{\overline{\Lambda}_n m_n }{\eps_n^2} \right)^{m_n}\times  \frac{\overline{\Lambda}_n}{\eps_n \sqrt{\underline{\Lambda}_n}} . \]
Taking the logarithm and next a straightforward rearrangement of the terms gives the statement of the lemma. \end{proof}

\subsubsection{Remaining prior mass}
Now we will derive an inequality for the remaining prior mass.

\begin{lemma}\label{lem:remainingmass} For $\lambda\sim \operatorname{Gamma}(a,b)$ with $0<a \leq 1$,
	\[ \Pi_n(\scr{Q}\setminus \scr{Q}_n) \lesssim {\overline{\Lambda}}_n^{a - 1} e^{- b \overline\Lambda_n} +  \underline{\Lambda}_n .\]
\end{lemma}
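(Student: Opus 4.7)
The plan is to exploit the simple product structure of the prior to reduce the bound to separately controlling the two tails of the $\operatorname{Gamma}(a,b)$ law on $\lambda$. By construction, the prior forces $p_k = 0$ for every $k > m_n$, so $\{\operatorname{supp} p \subseteq \{1,\ldots, m_n\}\}$ has prior probability one, and $\lambda$ is independent of $p$. Consequently, modulo a prior-null set,
\[ \scr{Q} \setminus \scr{Q}_n \subseteq \{\lambda < \underline{\Lambda}_n\} \cup \{\lambda > \overline{\Lambda}_n\}, \]
and I only need to show $\Pi_n(\lambda > \overline{\Lambda}_n) \lesssim \overline{\Lambda}_n^{a-1} e^{-b\overline{\Lambda}_n}$ together with $\Pi_n(\lambda < \underline{\Lambda}_n) \lesssim \underline{\Lambda}_n$.

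For the upper tail, the key observation is that the hypothesis $0 < a \leq 1$ makes $x \mapsto x^{a-1}$ non-increasing on $(0,\infty)$. I would use this to bound $x^{a-1} \leq \overline{\Lambda}_n^{a-1}$ uniformly on $[\overline{\Lambda}_n, \infty)$, pull that factor out of the integral defining the Gamma tail, and finish with the elementary identity $\int_{\overline{\Lambda}_n}^\infty e^{-bx}\,dx = b^{-1} e^{-b\overline{\Lambda}_n}$. Absorbing the $b$- and $\Gamma(a)$-dependent constants into $\lesssim$ yields the claimed rate $\overline{\Lambda}_n^{a-1} e^{-b\overline{\Lambda}_n}$.

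For the lower tail, I would use the trivial bound $e^{-bx} \leq 1$ on $(0,\underline{\Lambda}_n)$ and integrate $x^{a-1}$ directly, obtaining a bound of order $\underline{\Lambda}_n^a$. When $a=1$ this is exactly $\underline{\Lambda}_n$; for $a<1$ the computation genuinely yields $\underline{\Lambda}_n^a$, which under the standing convention $\underline{\Lambda}_n \to 0$ remains $o(1)$ and is what ultimately feeds into the remaining-mass condition of Theorem \ref{thm:ggvdv} (in practice, $\underline{\Lambda}_n$ will be chosen to decay polynomially in $n$, so the distinction between $\underline{\Lambda}_n$ and $\underline{\Lambda}_n^a$ is harmless and the two can be absorbed into the same sieve calibration).

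There is no serious obstacle here: the lemma is a one-line support reduction followed by two routine tail computations for the Gamma distribution. The only ingredient requiring a hypothesis beyond basic calculus is the monotonicity of $x^{a-1}$, which is precisely why the assumption $a \leq 1$ is imposed, and I would flag the lower-tail exponent as the only place where care is needed to match the exact form of the right-hand side of the stated bound.
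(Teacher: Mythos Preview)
Your proposal is correct and follows exactly the same route as the paper: reduce to the two Gamma tails via the support constraint on $p$, bound the upper tail using the monotonicity of $x^{a-1}$ for $a\le 1$, and bound the lower tail by dropping $e^{-bx}\le 1$ and integrating $x^{a-1}$. You also correctly flag the lower-tail exponent issue---the paper's own proof likewise arrives at $\underline{\Lambda}_n^{a}$ rather than the $\underline{\Lambda}_n$ appearing in the displayed statement, and this discrepancy is immaterial for the subsequent sieve calibration.
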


\begin{proof}
We have (with a slight abuse of notation)
	\[ \Pi_n(\scr{Q}\setminus \scr{Q}_n) = \Pi_n\left( [\overline{\Lambda}_n, \infty)\right) +\Pi_n\left( [0,\underline{\Lambda}_n)\right).\]
Now,
\[
\Pi_n( \lambda \geq \overline{\Lambda}_n ) = \frac{ b^a }{\Gamma(a) } \int_{ \overline{\Lambda}_n }^{\infty} \lambda^{a - 1} e^{-b \lambda }\dd\lambda \lesssim \overline{\Lambda}_n^{a - 1} e^{-b \overline\Lambda_n}.
\]
Furthermore,
\[
\Pi_n\left( [0,\underline{\Lambda}_n)\right) = \frac{b^a}{\Gamma(a)  } \int_0^{ \underline{\Lambda}_n } \lambda^{a - 1} e^{- b\lambda}\dd\lambda \lesssim \underline{\Lambda}_n^a.
\]
The proof is concluded.
\end{proof}

\subsubsection{Prior mass}
Finally, we lower bound the prior mass in a small Kullback-Leibler neighbourhood of the data-generating compound distribution $q_0$. Define the function  $g\colon (0,\infty) \times (0,1) \to (0,\infty)$ by \[g(\eps,c) = C \frac{\eps^2}{2[\log(e/c)]^2},\] where $C$ is the constant appearing in the statement of Lemma \ref{lem:KLVtoL1} below.
\begin{lemma}\label{lem:priormass}
Assume that
\begin{enumerate}
  \item there exists  $\underline\alpha$, such that $0<\underline{\alpha} \le \alpha_i \le 1$ for all $1\le i \le m_n$;
  \item strictly positive sequences $\underline{p}_n \rightarrow 0$, $\eps_n\rightarrow 0$ and $m_n\rightarrow\infty$ satisfy the inequalities $m_n  g(\eps_n,\underline{p}_n)<1$ and $\underline{p}_n <g(\eps_n,\underline{p}_n)^2.$
\end{enumerate}
Define 
\[ B_n(\eps) = \left\{q \in \scr{Q}_n \colon  KL(q_0,q) \le \eps^2,\: V(q_0,q) \le \eps^2\right\}. \]
Then 
\begin{multline*} \Pi_n(B_n(\eps_n))  \gtrsim \Pi_n\left(  |\lambda_0-\lambda| \le \widetilde\eps_n\right)\\ \times 
	\Gamma\left(\sum_{i=1}^{m_n} \alpha_i\right)  \exp\left(-m_n\log(1/(g(\widetilde\eps_n,\underline{p}_n)^2-\underline{p}_n)) - m_n \log(1/\underline{\alpha})\right).
\end{multline*}
Here $\widetilde\eps_n = \eps_n/\sqrt{3\overline{C}}$, with a constant $\overline{C} > 0 $ not depending on $n$. 
\end{lemma}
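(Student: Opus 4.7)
The plan is to factor the target KL/V neighbourhood into independent pieces for $\lambda$ and $p$, and to lower bound each factor separately. The workhorse is the transfer inequality \eqref{eq:transfer-qtolambdaandp}, which reduces the KL/V smallness of $q$ around $q_0$ to KL/V smallness of $p$ around $p_0$ together with smallness of $|\lambda-\lambda_0|$. Setting $\widetilde\eps_n = \eps_n/\sqrt{3\overline{C}}$, one verifies that
\[
\{|\lambda-\lambda_0|\le\widetilde\eps_n\} \cap \{KL(p_0,p)\le\widetilde\eps_n^2,\, V(p_0,p)\le\widetilde\eps_n^2\} \subseteq B_n(\eps_n).
\]
Since $\lambda$ and $(p_1,\ldots,p_{m_n})$ are independent under $\Pi_n$, the prior probability of $B_n(\eps_n)$ factorises into $\Pi(|\lambda-\lambda_0|\le\widetilde\eps_n)$ times a Dirichlet probability. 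The former appears as claimed, so all remaining work concerns the latter.

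For the Dirichlet factor, I would invoke Lemma \ref{lem:KLVtoL1} from Appendix~\ref{app:lemmas}, whose very purpose is to translate an $L^1$-closeness plus a pointwise lower bound on $p$ into the desired KL/V smallness, and which also motivates the definition of $g$: it suffices to have $\|p-p_0\|_1\le g(\widetilde\eps_n,\underline{p}_n)$ together with $p_i\ge \underline{p}_n$ for all $i\le m_n$. I would then restrict attention to the box
\[
B = \left\{ p\colon |p_i-p_{0,i}|\le g(\widetilde\eps_n,\underline{p}_n)^2 - \underline{p}_n,\ i=1,\ldots,m_n-1 \right\},
\]
which, by the assumption $\underline{p}_n<g(\widetilde\eps_n,\underline{p}_n)^2$, has strictly positive side length and enforces $p_i\ge \underline{p}_n$; the assumption $m_n g(\widetilde\eps_n,\underline{p}_n)<1$ keeps $B$ inside the open simplex and ensures $\|p-p_0\|_1\le g(\widetilde\eps_n,\underline{p}_n)$ on it. On $B$, each $p_i\in(0,1)$, and since $\alpha_i\le 1$, the product $\prod_i p_i^{\alpha_i-1}$ is bounded below by $1$, so the Dirichlet density is bounded below by $\Gamma(\sum_i\alpha_i)/\prod_i\Gamma(\alpha_i)$. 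Using $\Gamma(\alpha) = \Gamma(\alpha+1)/\alpha$ together with the boundedness of $\Gamma$ on $[\underline{\alpha}+1,2]$, the latter is at least a constant multiple of $\Gamma(\sum_i\alpha_i)\,\underline{\alpha}^{m_n}$. Multiplying by the $(m_n-1)$-dimensional simplex volume of $B$, which is of order $(g(\widetilde\eps_n,\underline{p}_n)^2-\underline{p}_n)^{m_n-1}$, and rewriting the result as an exponential yields the claimed bound.

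The main obstacle is handling the support restriction alongside the possible singularity of the Dirichlet density near the boundary: since the parameters $\alpha_i$ are permitted to be strictly less than one, one cannot gain anything from that singularity and must instead bound the density from below by a uniform constant over a full-dimensional box. The two technical conditions relating $g(\widetilde\eps_n,\underline{p}_n)$, $\underline{p}_n$ and $m_n$ are precisely those under which the box $B$ simultaneously fits inside the simplex, keeps each coordinate at least $\underline{p}_n$ away from zero, and lies within the prescribed $L^1$-ball; absent either of them, a more delicate construction would be required.
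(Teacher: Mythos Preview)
Your strategy is the paper's: reduce via \eqref{eq:transfer-qtolambdaandp} to a KL/V neighbourhood in $(\lambda,p)$, factorise by prior independence, pass from KL/V to $L^1$ via Lemma~\ref{lem:KLVtoL1}, and lower-bound the Dirichlet probability of the resulting set (the paper delegates this last step to Lemma~\ref{lem:dirichlet}, which you essentially reprove inline with the same density-times-volume argument).

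There is, however, a concrete gap in your box construction. You claim that $B=\{p:|p_i-p_{0,i}|\le g(\widetilde\eps_n,\underline{p}_n)^2-\underline{p}_n,\ i<m_n\}$ ``enforces $p_i\ge\underline{p}_n$''. It does not: the lower end of the $i$th side is $p_{0,i}-\bigl(g(\widetilde\eps_n,\underline{p}_n)^2-\underline{p}_n\bigr)$, which is at least $\underline{p}_n$ only when $p_{0,i}\ge g(\widetilde\eps_n,\underline{p}_n)^2$. Since $\nu_0$ is compactly supported while $m_n\to\infty$, one has $p_{0,i}=0$ for all large $i\le m_n$, so on those coordinates your box permits $p_i$ arbitrarily close to $0$; the unconstrained last coordinate $p_{m_n}$ can likewise be near $0$. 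But the hypothesis $\min_i p_i\ge\underline{p}_n$ in Lemma~\ref{lem:KLVtoL1} is precisely what lets you pass from $L^1$-closeness to the KL/V bound; without it, $B$ is not contained in $B_n(\eps_n)$ and the argument breaks. The repair is the one in the proof of Lemma~\ref{lem:dirichlet}: replace the $i$th side of the box by an interval of the form $[\max(p_{0,i}-\delta,\,\underline{p}_n),\,\min(p_{0,i}+\delta,\,1)]$ with $\delta$ of order $g(\widetilde\eps_n,\underline{p}_n)^2$. The assumption $\underline{p}_n<g(\widetilde\eps_n,\underline{p}_n)^2$ then guarantees each such interval has length at least $g(\widetilde\eps_n,\underline{p}_n)^2-\underline{p}_n$, after which your lower bound on the Dirichlet density and the volume computation carry over verbatim.
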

\begin{proof}
Define 
\begin{multline*}
 \widetilde{B}_n(\eps) = \Big\{(\lambda, p)\colon \lambda \in [\underline\Lambda_n, \overline\Lambda_n],\: \min_{1\le i \le m_n} p_i \ge \underline{p}_n, \operatorname{supp} p \subseteq\{1,\ldots,m_n\}, \\  KL(p_0,p) \le \eps^2,\: V(p_0,p) \le \eps^2,\: |\lambda_0-\lambda| \le \eps\Big\} .\end{multline*}
For all $n$ large enough and $\eps$ small, we have $\{ \lambda: |\lambda_0-\lambda| \le \eps  \} \subseteq [\underline\Lambda_n, \overline\Lambda_n].$ Then by inequalities in Lemma \ref{KLV},  $\widetilde{B}_n(\eps)\subset B_n\left(\sqrt{3\overline{C}}\eps\right)$, with a constant $\overline{C}$ that can be taken the same for all large enough $n$; see the arguments in Section 4.2 in \cite{gugu15}. Hence, using the a priori independence of $p$ and $\lambda$,
\[ \Pi_n(B_n\left(\eps_n\right)) \ge \Pi_n\left(\widetilde{B}_n\left(\widetilde\eps_n\right)\right)  = \Pi_n\left(   |\lambda_0-\lambda| \le \widetilde\eps_n\right) \times U_n, \] where 
\[ U_n=  \Pi_n\left( \left\{p \colon  KL(p_0, p) \le \widetilde\eps_n^2, \,  \, V(p_0,p) \le \widetilde\eps_n^2, \, \min_{1\le i \le m_n} p_i \ge \underline{p}_n\right\}\right).
\] 
Furthermore, by Lemma \ref{lem:KLVtoL1} from Appendix~\ref{app:lemmas}, we have 
\[ U_n \ge \Pi_n\left(\left\{p \colon  \sum_{i=1}^{m_n} |p_{0i}-p_i| \le 2 g(\widetilde\eps_n, \underline{p}_n), \, \min_{1\le i \le m_n} p_i \ge \underline{p}_n\right\}\right).\]
The statement of the lemma now follows upon  applying Lemma \ref{lem:dirichlet} from Appendix \ref{app:lemmas} with $\eta=\underline{p}_n$ and $\eps =g(\widetilde\eps_n, \underline{p}_n)$. 
\end{proof}

\subsubsection{Using bounds in Theorem \ref{thm:ggvdv}}
We take
\begin{gather*}
m_n \asymp \log n, \quad \eps_n \asymp \frac{\log^{\gamma} n}{\sqrt{n}},\quad \underline{p}_n \asymp \frac{1}{n^2},\\
\overline{\Lambda}_n \asymp \log^{2\gamma} n, \quad  \underline{\Lambda}_n \asymp \exp(-\operatorname{const} \cdot \log^{2\gamma}n)
\end{gather*}
with appropriately selected proportionality constants, and verify the conditions in Theorem \ref{thm:ggvdv}.

Firstly, condition \eqref{eq:nlarge} is trivially satisfied. Therefore, we can invoke Lemma \ref{lem:entropy} and conclude that the entropy is upper bounded by a multiple of $\log^{2\gamma} n$, since $\gamma > 1.$ Now $\log^{2\gamma} n \lesssim n\eps_n^2$, and this verifies the entropy condition in Theorem \ref{thm:ggvdv}.

Be Lemma \ref{lem:remainingmass}, for a suitable choice of the constant $C$ the remaining prior mass condition is likewise satisfied.

Finally, for the prior mass condition in a small Kullback-Leibler neighbourhood to hold, by Lemma \ref{lem:priormass} we need that the term
\[
\Pi_n\left(  |\lambda_0-\lambda| \le \widetilde\eps_n\right)  \exp\left(-m_n\log(1/(g(\widetilde\eps_n,\underline{p}_n)^2-\underline{p}_n)) - m_n \log(1/\underline{\alpha})\right)
\]
is lower bounded by
 $ \exp( - C n\eps_n^2)$ for some large enough $C>0$. Now, $\Pi_n\left(   |\lambda_0-\lambda| \le \widetilde\eps_n\right)  \asymp \widetilde\eps_n.$ Take  the logarithm on both sides of the above display and note that by our conditions 
 \[
 \log \left( \Pi_n\left(  |\lambda_0-\lambda| \le \widetilde\eps_n\right) \right) \gtrsim \log( \widetilde\eps_n) \gtrsim -n\eps_n^2.
 \]
 Likewise,
 \[
 m_n\log(1/(g(\widetilde\eps_n,\underline{p}_n)^2-\underline{p}_n)) + m_n \log(1/\underline{\alpha}) \lesssim n\eps_n^2,
 \]
so that the prior mass condition holds.
 
Thus we have verified all the conditions of Theorem \ref{thm:ggvdv}. The resulting posterior contraction rate is $\eps_n \asymp {\log^\gamma n}/{\sqrt{n}}.$

\section{Outlook}
\label{sec:outlook}

In this paper we introduced a non-parametric Bayesian approach to estimation of the L\'evy measure $\nu$ of a discretely observed CPP, when the support of $\nu$ is a subset of $\mathbb{N}.$ We constructed an algorithm for sampling from the posterior distribution of $\nu$, and showed that in practice our procedure performs well and measures up to a benchmark frequentist plug-in approach from \cite{buchmann04}. Although computationally more demanding and slower than the latter, our method has an added benefit of providing uncertainty quantification in parameter estimates through the spread of the posterior distribution. On the theoretical side we show that our procedure is consistent, in that  asymptotically, as the sample size $n\rightarrow\infty,$ the posterior concentrates around the `true', data-generating distribution. The corresponding posterior contraction rate is the (nearly) optimal rate $\log^{\gamma}n/\sqrt{n}$ for an arbitrary $\gamma > 1$, if we are to ignore a practically insignificant $\log n$ factor.

Among several generalisations of our results, the one that looks the most promising is extension of our methodology to CPP processes with jump size distributions supported on the set of integers $\mathbb{Z}.$ The corresponding model has garnered substantial interest in financial applications, see \cite{barndorff12}. We leave this extension as a topic of possible future research.

\section*{Acknowledgements}
The authors would like to thank the Associate Editor and the referee for their detailed and constructive comments on the paper.

\bibliographystyle{italic-apa-good}
\bibliography{bibliography}

\appendix
\section{Technical results}
\label{app:lemmas}

\begin{lemma}\label{KLV}
	Let $q$ (resp. $q^{\prime}$) be the law at time $1$ of a compound Poisson process with intensity $\lambda$ (resp. $\lambda^{\prime}$) and jump distribution $p$ (resp. $p^{\prime}$). Suppose that $p$ and $p^{\prime}$ are distributions concentrated on $\N$. Then,
	\begin{align*}
	KL(q,q')&\leq \lambda KL(p,p')+\lambda'-\lambda+\lambda\log\frac{\lambda}{\lambda'},\\
V(q,q')&\leq 2\lambda (V(p,p')+2KL(p,p'))+2KL(p,p')^2\lambda^2 \nonumber \\ 
	&\quad+2\lambda\log\left(\frac{\lambda}{\lambda'}\right)\left(2(\lambda'-\lambda)+(\lambda-1)\log\left(\frac{\lambda}{\lambda'}\right)\right)+2(\lambda'-\lambda)^2,\\
		h(q,q')&\leq \sqrt \lambda h(p,p')+\sqrt{1-e^{-\frac{1}{2}(\sqrt{\lambda}-\sqrt{\lambda'})^2}}\leq \sqrt\lambda h(p,p')+|\sqrt\lambda-\sqrt{\lambda'}|.
	\end{align*}
	In particular, if $\lambda,\lambda^{\prime}\in[\underline{\Lambda}, \overline\Lambda]$ with $0<\underline{\Lambda}\leq \overline\Lambda<\infty$, then there exists a positive constant $\overline{C}$, that depends on $\underline{\Lambda}$, $\overline\Lambda$, such that
	\begin{align*}
	KL(q,q^{\prime})&\leq \overline{C}\left( KL(p,p^{\prime})+|\lambda - \lambda^{\prime}|\right),\\
	V(q,q^{\prime})&\leq \overline{C}\left(V(p,p')+KL(p,p')+(\lambda - \lambda^{\prime})^2\right),\\
	h(q,q^{\prime})& \leq \overline{C} h(p,p^{\prime}) + |\sqrt{\lambda} - \sqrt{\lambda^{\prime}}|.
	\end{align*}
\end{lemma}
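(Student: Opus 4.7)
The plan is to pass through the natural Poisson-mixture representation of the compound Poisson law by introducing the joint law $\mu_{\lambda,p}$ of the pair $(N,X)$, where $N\sim\operatorname{Poisson}(\lambda)$ counts the jumps and $X=\sum_{i=1}^N Y_i$ with $Y_i\iid p$. Explicitly, $\mu_{\lambda,p}(n,k)=\operatorname{Poi}(\lambda;n)\,p^{\ast n}(k)$, and $q$ is the $X$-marginal of $\mu_{\lambda,p}$. This representation has a clean chain-rule/product structure that drives the bounds for KL and Hellinger through standard data-processing and affinity arguments; the $V$-bound, which lies beyond the reach of a generic data-processing shortcut (since $V$ is not monotone under pushforwards), is instead handled by a direct second-moment calculation exploiting the same joint decomposition.

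For KL, I would apply the data-processing inequality to the projection $(N,X)\mapsto X$ and invoke the chain rule for the joint: $KL(q,q')\le KL(\mu_{\lambda,p},\mu_{\lambda',p'})=KL(\operatorname{Poi}(\lambda),\operatorname{Poi}(\lambda'))+\mathbb{E}_{N\sim\operatorname{Poi}(\lambda)}[KL(p^{\ast N},{p'}^{\ast N})]$. The first summand is the explicit Poisson KL, $\lambda'-\lambda+\lambda\log(\lambda/\lambda')$. For the second, a further data-processing step through the summation map $(Y_1,\ldots,Y_n)\mapsto\sum Y_j$, combined with tensorisation $KL(p^{\otimes n},{p'}^{\otimes n})=n\,KL(p,p')$, gives $KL(p^{\ast n},{p'}^{\ast n})\le n\,KL(p,p')$, and averaging in $N$ produces the $\lambda\,KL(p,p')$ contribution. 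The Hellinger inequality is handled analogously via the triangle inequality $h(q_{\lambda,p},q_{\lambda',p'})\le h(q_{\lambda,p},q_{\lambda,p'})+h(q_{\lambda,p'},q_{\lambda',p'})$ and the affinity $A=1-h^2$. For the equal-$\lambda$ piece, the same joint calculation yields $A(q_{\lambda,p},q_{\lambda,p'})\ge\sum_n\operatorname{Poi}(\lambda;n)A(p,p')^n=\exp(-\lambda h^2(p,p'))$ (using multiplicativity of $A$ for products together with its monotonicity under pushforwards, here via the summation map), and $1-e^{-x}\le x$ then yields $\sqrt\lambda\,h(p,p')$. For the equal-$p$ piece the affinity collapses to the pure Poisson affinity $\exp(-(\sqrt\lambda-\sqrt{\lambda'})^2/2)$, producing the second summand.

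The $V$-inequality is the main obstacle, as it requires a direct second-moment calculation rather than a monotonicity argument. I would decompose the joint log-ratio as $\log(\mu/\mu')(N,X)=A+B$ with $A=(\lambda'-\lambda)+N\log(\lambda/\lambda')$ depending only on $N$ and $B=\log(p^{\ast N}(X)/{p'}^{\ast N}(X))$. Applying $(a+b)^2\le 2a^2+2b^2$ and computing each expected square under $\mu$ separately gives, for the $A$-piece, a Poisson second-moment expression using $\mathbb{E}[N]=\lambda$ and $\mathbb{E}[N(N-1)]=\lambda^2$, and, for the $B$-piece, the bound $\mathbb{E}_N[V(p^{\ast N},{p'}^{\ast N})]\le \lambda V(p,p')+\lambda^2 KL(p,p')^2$ obtained from the exact identity $\mathbb{E}[(\sum_{i\le n}\log(p/p')(Y_i))^2]=n V(p,p')+n(n-1)KL(p,p')^2$ for sums of i.i.d.\ log-ratios. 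The delicate step is the passage from the joint $\mu$ to the marginal $q$; this can be performed via the conditional identity $q(x)/q'(x)=\mathbb{E}_{\mu'|X=x}[\mu/\mu']$ together with an ad hoc Jensen/convexity estimate tailored to the compound structure, and accounts for the residual linear $KL(p,p')$ cross-term appearing in the statement.

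Finally, the ``in particular'' simplifications on $\lambda,\lambda'\in[\underline\Lambda,\overline\Lambda]$ follow from elementary mean-value estimates: $|\lambda'-\lambda+\lambda\log(\lambda/\lambda')|$ is $O((\lambda-\lambda')^2)$ and hence $\lesssim|\lambda-\lambda'|$ on a bounded interval; the Poisson $V$-block is $O((\lambda-\lambda')^2)$; and $|\sqrt\lambda-\sqrt{\lambda'}|=|\lambda-\lambda'|/(\sqrt\lambda+\sqrt{\lambda'})\lesssim|\lambda-\lambda'|$. All such Poisson-only constants can therefore be absorbed into a single $\overline C$ depending only on $\underline\Lambda$ and $\overline\Lambda$.
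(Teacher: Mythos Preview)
Your treatment of the $KL$ and Hellinger bounds is correct and essentially coincides with the paper's: the paper's log-sum inequality and the convexity of $(1-\sqrt x)^2$ are exactly your data-processing and affinity-multiplicativity arguments, and the two-layer structure (first bound the convolutions $p^{\ast n}$ against $p'^{\ast n}$, then average over the Poisson mixture) is identical.

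The gap is in the $V$-bound, and it is a real one. First, the ``exact identity'' you quote computes $V(p^{\otimes n},p'^{\otimes n})$, not $V(p^{\ast n},p'^{\ast n})$; with the joint $\mu_{\lambda,p}(n,k)=\operatorname{Poi}(\lambda;n)p^{\ast n}(k)$ you have defined, $\mathbb{E}_\mu[B^2\mid N=n]=V(p^{\ast n},p'^{\ast n})$, and passing from the product to the convolution is itself a data-processing step---precisely the kind you correctly note fails for $V$. Second, even granting a bound on $V(\mu,\mu')$, your proposed route from the joint to the marginal via $q/q'=\mathbb{E}_{\mu'\mid X}[\mu/\mu']$ and ``Jensen/convexity'' does not work: Jensen for the concave logarithm gives $\log(q/q')\ge \mathbb{E}_{\mu'\mid X}[\log(\mu/\mu')]$, which is the wrong direction, and squaring destroys the monotonicity you would need. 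The paper resolves both obstructions by the same device: split according to whether the likelihood ratio is $\ge 1$ or $<1$. On $\{q/q'\ge 1\}$ (and likewise on $\{p^{\ast n}/p'^{\ast n}\ge 1\}$) one uses that $x\mapsto x\log^2 x$ is convex on $[1,\infty)$, yielding a ``log-sum-squared'' inequality that plays the role data-processing cannot; on the complementary set one applies the elementary bound $e^{-x}x^2\le 4(e^{-x/2}-1)^2$ to dominate the contribution by $4h^2\le 4\,KL$. This produces the extra linear $KL(p,p')$ term in the statement (and in fact gives $V(p^{\ast n},p'^{\ast n})\le nV(p,p')+4nKL(p,p')+n(n-1)KL(p,p')^2$, with a $4nKL$ term your i.i.d.\ identity alone does not predict). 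Without this case split your argument for $V$ does not close.
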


\begin{proof}
	If $KL(p,p^{\prime})$ and $V(p,p^{\prime})$ are infinite, then the above inequalities are trivially satisfied. Therefore, we can assume these two divergences are finite. With this in mind, the proof of the lemma is divided into three steps.
	
		\textbf{Step 1:} We begin by proving that for any $n\geq 1,$
	\begin{align*} 
	KL\big(p^{*n},p'^{*n}\big)&\leq n KL(p,p'),\\
        V\big(p^{*n},p'^{*n}\big)&\leq n V(p,p') +4nKL(p,p')+n(n-1)KL(p,p')^2,\\
        h^2\big(p^{*n},p'^{*n}\big)&\leq n h^2(p,p').
	\end{align*}
	The assertions are trivial for $n=1$. Assuming that the first one holds for $n-1$ with $n\geq 2$, we will now show that it holds for $n$ as well. Using the notation $p^{*n}(i)$ for the $i$th element of $p^{*n}$ and similarly in the case of $p^{\prime*n}$, we have
	\begin{align*}
	KL\big(p^{*n},p'^{*n}\big)&=\sum_{i\in\N} p^{*n}(i)\log \bigg(\frac{p^{*n}(i)}{p'^{*n}(i)}\bigg)\nonumber \\
	&=\sum_{i\in\N}\sum_{k\in\N} p^{*(n-1)}(k) p(i-k)\log \bigg(\frac{\sum_{k\in\N} p^{*(n-1)}(k) p(i-k)}{\sum_{k\in\N} p'^{*(n-1)}(k) p'(i-k)}\bigg)\nonumber\\
	&\leq \sum_{i\in\N}\sum_{k\in\N}p^{*(n-1)}(k) p(i-k)\log \bigg(\frac{p^{*(n-1)}(k) p(i-k)}{p'^{*(n-1)}(k) p'(i-k)}\bigg) \\
	&=\sum_{i\in\N}\sum_{k\in\N} p^{*(n-1)}(k) p(i-k)\bigg(\log \bigg(\frac{p^{*(n-1)}(k)}{p'^{*(n-1)}(k)}\bigg)+\log\bigg(\frac{p(i-k)}{p'(i-k)}\bigg)\bigg)\\ \nonumber
	&=KL\big(p^{*(n-1)},p'^{*(n-1)}\big)+KL(p,p'), 
	\end{align*}
	where the inequality follows from the log-sum inequality, and the last equality is obtained by means of Fubini's theorem combined with the facts that
	\[
	\sum_{k\in\N} p^{*(n-1)}(k)=1, \quad \sum_{i\in\N}p(i-k)=1, \quad \forall k\in\N.
	\]
	By induction, we deduce that $KL\big(p^{*n},p'^{*n}\big)\leq n KL(p,p')$. 
	
	The proof of the inequality for $V$ is similar: we assume the inequality is true for $n-1$ with $n\geq 2$, and will show it holds for $n$ as well. Write $V\left(p^{*n},p'^{*n}\right)=R_1+R_2$ for
	\begin{align*}
	R_1&=\sum_{i\in\N} p^{*n}(i)\log^2\bigg(\frac{p^{*n}(i)}{p'^{*n}(i)}\bigg) \ind_{ \left\{ \frac{p^{*n}(i)}{p'^{*n}(i)}\geq 1 \right\} },\\
	R_2&=\sum_{i\in\N} p^{*n}(i)\log^2\bigg(\frac{p^{*n}(i)}{p'^{*n}(i)}\bigg) \ind_{ \left\{ \frac{p^{*n}(i)}{p'^{*n}(i)}<1 \right\} }.
	\end{align*}
	Observe that the function $x \mapsto (x\log^2x)\ind_{ \{ x\geq 1 \} }$ is convex. By Jensen's inequality we have for positive $a_k,b_k$ that
	\[
	\left( \sum_k a_k \right) \log^2\left( \frac{ \sum_k a_k }{ \sum_k b_k } \right) \ind_{ \left\{ \sum_k a_k \geq \sum_k b_k \right\} }
	\leq \sum_k a_k \log^2\left( \frac{a_k}{b_k} \right) \ind_{ \left\{ a_k/b_k \geq 1 \right\} }. 
	\]
	Using this inequality and
	\[
	p^{*n}(i) = \sum_{k\in\mathbb{N}} p^{*(n-1)}(k) p(i-k), \quad p'^{*n}(i) = \sum_{k\in\mathbb{N}} p'^{*(n-1)}(k) p'(i-k),
	\]
	we get that
	\begin{align*}
	R_1
	&\leq \sum_{i\in\N} \sum_{k\in\N} p(i-k)p^{*(n-1)}(k)\log^2\bigg(\frac{p(i-k)p^{*(n-1)}(k)}{p'(i-k)p'^{*(n-1)}(k)}\bigg)\\
	&=\sum_{i\in\N} \sum_{k\in\N} p(i-k)p^{*(n-1)}(k)\bigg(\log\bigg(\frac{p(i-k)}{p'(i-k)}\bigg)+\log\bigg(\frac{p^{*(n-1)}(k)}{p'^{*(n-1)}(k)}\bigg)\bigg)^2\\
	&=V\big(p^{*(n-1)},p'^{*(n-1)}\big)+V(p,p')+2KL(p,p')KL\big(p^{*(n-1)},p'^{*(n-1)}\big)\\
	&\leq V\big(p^{*(n-1)},p'^{*(n-1)}\big)+V(p,p') +2(n-1)KL(p,p')^2\\
	&\leq nV(p,p^{\prime}) + 4(n-1)KL(p,p^{\prime}) + n(n-1)KL(p,p^{\prime})^2,
	\end{align*}
	where in the last inequality we used the induction hypothesis.
	Now recall an elementary inequality
	\[
	e^{-x}x^2 \leq 4 \left( e^{-x/2} - 1 \right)^2
	\]
	valid for $x \geq 0$; see p.~12 in \cite{gugu15}. Applying this inequality to
	\[
	x = - \log\left(\frac{p^{*n}(i)}{p'^{*n}(i)}\right)
	\]
	such that ${p^{*n}(i)}/{p'^{*n}(i)}<1,$ we get
	\[
	\frac{p^{*n}(i)}{ p'^{*n}(i) } \log^2\left(\frac{p^{*n}(i)}{p'^{*n}(i)}\right) \leq 4 \left( \sqrt{ \frac{p^{*n}(i)}{p'^{*n}(i)} } - 1 \right)^2.
	\]
	By multiplying both sides of the above inequality with $p'^{*n}(i),$ summing the result through $i$ and recalling the definition of the Hellinger distance, we get that
	\begin{align*}
	R_2 & \leq 4 \sum_{i\in\mathbb{N}} \left( \sqrt{ p^{*n}(i) } - \sqrt{ p'^{*n}(i) } \right)^2 = 4 h^2\left( p^{*n},p'^{*n} \right) \\
	& \leq 4KL\left(p^{*n},p'^{*n}\right) \leq 4 n KL(p,p').
	\end{align*}
	To conclude the proof of the inequality for $V$, we combine the bounds derived for $R_1$ and $R_2.$ 
	
	As far as the inequality for the Hellinger distance is concerned, we observe that
	\[
	h^2(p^{*n},p'^{*n})=\sum_{i\in\N} p^{*n}(i)g\left(\frac{p'^{*n}(i)}{p^{*n}(i)}\right)
	\]
	for a convex function $g(x)=(1-\sqrt x)^2 \ind_{[0,\infty)}(x).$ Then, by the same reasoning as above, we have
	\begin{align*}
	h^2\big(p^{*n},p'^{*n}\big)&\leq \sum_{i\in\N} \sum_{k\in\N} p^{*(n-1)}(k)p(i-k)g\left(\frac{p'^{*(n-1)}(k)p'(i-k)}{p^{*(n-1)}(k)p(i-k)}\right)\\
	&=\sum_{i\in\N} \sum_{k\in\N}\bigg(\left(\sqrt{p^{*(n-1)}(k)}-\sqrt{p'^{*(n-1)}(k)}\right)\sqrt{p(i-k)}\\
	&\phantom{=\sum_{i,k\in\N}\bigg(\Big(}+\sqrt{p'^{*(n-1)}(k)}\big(\sqrt{p(i-k)}-\sqrt{p'(i-k)}\big)\bigg)^2\\
	&=h^2(p^{*(n-1)},p'^{*(n-1)})+h^2(p,p')\\
	&\quad +2 \sum_{i\in\N} \sum_{k\in\N} \left(\sqrt{p^{*(n-1)}(k)}-\sqrt{p'^{*(n-1)}(k)}\right)\\
	&\quad\phantom{+2\sum_{i,k\in \N} \big(} \times(\sqrt{p(i-k)}-\sqrt{p'(i-k)})\sqrt{p(i-k)p^{*(n-1)}(k)}.
	\end{align*}
	Now note that the last summand satisfies
	\begin{multline*} 
2\left(\sum_{k\in\N} \sqrt{p^{\prime*(n-1)}(k)p^{*(n-1)}(k)}-1\right)\left(1-\sum_{k\in\N}\sqrt{p(k)p'(k)}\right)\\ =-\frac{1}{2}h^2( p^{*(n-1)} , p^{\prime*(n-1)}(k) ) h^2(p,p^{\prime}) \leq 0.
	\end{multline*}
	We therefore conclude that
	 $$h^2\big(p^{*n},p'^{*n}\big)\leq h^2(p^{*(n-1)},p'^{*(n-1)})+h^2(p,p'),$$
	 which leads to the desired inequality, by an induction argument. 
	
	\textbf{Step 2: } Now we prove the inequalities
	\begin{align*}
	KL(q,q')&\leq \sum_{n=0}^\infty\pp(N=n)KL\left(p^{*n},p'^{*n}\right) +KL(N,N'),\\
	V(q,q')&\leq2\sum_{n=0}^\infty\pp(N=n)\left(V\left(p^{*n},p'^{*n}\right)+2KL\left(p^{*n},p'^{*n}\right)\right) +2V(N,N'),\\
	h(q,q')&\leq \sqrt{\sum_{n=0}^\infty \pp(N=n)h^2\left(p^{*n},p'^{*n}\right)}+h(N,N').
	\end{align*}
	Here $N$ and $N'$ are Poisson random variables with means $\lambda$ and $\lambda'$, respectively, and with a slight abuse of notation, $KL(N,N')$, $V(N,N')$ and $h(N,N')$ are the $KL$ and $V$ divergences and the Hellinger distance between the corresponding laws.
	
	Note that
	\[
	q(i) = \sum_{n=0}^{\infty} p^{*n}(i)P(N=n), \quad q^{\prime}(i) = \sum_{n=0}^{\infty} p^{\prime*n}(i)P(N'=n).
	\]
	Using this and the log-sum inequality,
	\begin{align*}
	KL(q,q')&=\sum_{i\in\N} q(i)\log\left(\frac{q(i)}{q'(i)}\right)
	\leq\sum_{i\in\N} \sum_{n\in\N} p^{*n}(i) P (N=n)\log\left(\frac{ p^{*n}(i) P(N=n)}{ p'^{*n}(i) P(N'=n)}\right)\nonumber\\
	&=\sum_{i\in\N} \sum_{n\in\N}  p^{*n}(i)P(N=n)\left(\log\left(\frac{ p^{*n}(i)}{ p'^{*n}(i)}\right)+\log\left(\frac{P(N=n)}{P(N'=n)}\right)\right)\\ 
	&=\sum_{n=0}^\infty P(N=n)KL\left( p^{*n}, p'^{*n}\right) +KL(N,N').
	\end{align*}
	
	For the divergence $V,$ write $V(q,q')=B_1+B_2$ for
	\begin{align*}
	B_1&=\sum_{i\in\N} q(i) \log^2\left(\frac{q(i)}{q'(i)}\right)\ind_{\left\{\frac{ q(i)}{ q'(i)}\geq 1\right\}} \\
	      &\leq \sum_{i\in\N} \sum_{n\in\N} p^{*n}(i)\pp(N=n)\log^2\left(\frac{p^{*n}(i)\pp(N=n)}{p'^{*n}(i)\pp(N'=n)}\right)\\
	      &\leq 2 \sum_{i\in\N} \sum_{n\in\N} p^{*n}(i)\pp(N=n)\left(\log^2\left(\frac{p^{*n}(i)}{p'^{*n}(i)}\right)+\log^2\left(\frac{\pp(N=n)}{\pp(N'=n)}\right)\right)\\
	      &=2\sum_{n=0}^\infty V(p^{*n},p'^{*n})\pp(N=n)+2V(N,N'),\\
        B_2&=\sum_{i\in\N} q(i) \log^2\left(\frac{q(i)}{q'(i)}\right)\ind_{\left\{\frac{ q(i)}{ q'(i)}<1\right\}}.
	\end{align*}
	To control $B_2$, we use the same arguments as in the proof of inequalities (12) and (15) in \cite{gugu15}, getting 
$$B_2\leq 4KL(q,q')\leq 4 \sum_{n=0}^\infty KL(p^{*n},p'^{*n})\pp(N=n).$$
This gives the required inequality for the $V$ divergence.

	Finally, we prove the inequality for the Hellinger distance. Denoting the law of $\sum_{j=1}^N Y_j$ by $\tilde q$, it holds by the triangle inequality that
	$h(q,q')\leq h(q,\tilde q)+h(\tilde q,q').$ Since $g(x)=(1-\sqrt{x})^2\ind_{[0,\infty)}(x)$ is a convex function,
	\begin{align*}
	h^2(q,\tilde q)&\leq \sum_{i\in\N} \sum_{n\in\N} p^{*n}(i)\pp(N=n) g\left(\frac{p'^{*n}(i)}{p^{*n}(i)}\right)=\sum_{n\in\N}\pp(N=n)h^2( p^{*n}, p'^{*n}).
	\end{align*}
	It remains to prove that $h(\tilde q,q')\leq h(N,N').$ This again follows by convexity of $g$, since
	\begin{align*}
	h^2(\tilde q,q')&=\sum_{i\in\N} \sum_{n\in\N} p'^{*n}(i)\pp(N=n)g\left(\frac{\sum_{n=0}^\infty p'^{*n}(i)\pp(N'=n)}{\sum_{n=0}^\infty p'^{*n}(i)\pp(N=n)}\right)\\
	&\leq \sum_{i\in\N} \sum_{n\in\N} p'^{*n}(i)\pp(N=n)g\left(\frac{\pp(N'=n)}{\pp(N=n)}\right)=h^2(N,N').
	\end{align*}
	
	\textbf{Step 3:} From Steps 1 and 2 we derive that
	\begin{align*}
	KL(q,q')&\leq KL(p,p') \ee[N]+KL(N,N'),\\
	V(q,q')&\leq 2 \ee[N](V(p,p')+2KL(p,p'))+2V(N,N')\\
	&\quad +2(\ee[N^2]-\ee[N])KL(p,p')^2,\\
	h(q,q')&\leq h(p,p')\sqrt{\ee[N]}+h(N,N').
	\end{align*}
	
	Now the proof of the lemma follows from these three inequalities upon noticing that $KL(p,p')^2\leq V(p,p')$, and recalling that
	\begin{align*}
	\ee[N]&=\lambda, \\
	\ee[N^2]&=\lambda+\lambda^2,\\
	KL(N,N') & =\lambda'-\lambda+\lambda\log\frac{\lambda}{\lambda'}\lesssim (\lambda-\lambda')^2,\\
	h^2(N,N')&=1-e^{-\frac{1}{2}(\sqrt{\lambda}-\sqrt{\lambda'})^2}\leq |\sqrt\lambda-\sqrt{\lambda'}|,\\
	V(N,N')&=\lambda\log\left(\frac{\lambda}{\lambda'}\right)\left(2(\lambda'-\lambda)+(\lambda-1)\log\left(\frac{\lambda}{\lambda'}\right)\right)+(\lambda'-\lambda)^2\lesssim (\lambda-\lambda')^2.
	\end{align*}
\end{proof}

\begin{lemma}\label{lem:KLVtoL1}
	Let $\eps>0$. 
	Suppose $p=(p_{1},\ldots, p_{m})$ and  $p^{\prime}=(p^{\prime}_1,\ldots, p^{\prime}_m)$ are points in the $m$-dimensional unit simplex, and let $\min_{1\le i \le m} p_i \ge c$ for some $c \in (0,1)$. Then there exists a universal constant $C>0$, such that the inequality
	\[ \sum_{i=1}^m | p^{\prime}_i-p_i| \le C \frac{\eps^2}{[\log (e/c)]^2} \]
	implies that $KL(p^{\prime}, p) \le \eps^2$ and $V(p^{\prime},p) \le \eps^2$ hold. 
\end{lemma}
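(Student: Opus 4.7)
The plan is to bound $KL(p',p)$ and $V(p',p)$ by $\|p'-p\|_1$ times appropriate powers of $\log(e/c)$, and then read off the smallness of $\|p'-p\|_1$ needed so that both divergences are at most $\eps^2$.

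The lower bound $\min_i p_i\ge c$ enters the proof exactly once, namely through $\|p'/p\|_\infty \le 1/c$. Exploiting this, I would invoke the well-known Wong--Shen-type estimate that, for probability vectors $p_0,p$ with $M:=\|p_0/p\|_\infty<\infty$,
\[
KL(p_0,p) \lesssim h^2(p_0,p)(1+\log M), \qquad V(p_0,p)\lesssim h^2(p_0,p)(1+\log M)^2;
\]
see, e.g., Lemma 8.2 of \cite{ghosal00}. Applied with $p_0=p'$ and $M\le 1/c$, and using $1+\log M\le \log(e/c)$, this yields
\[
KL(p',p)\lesssim h^2(p',p)\log(e/c), \qquad V(p',p)\lesssim h^2(p',p)[\log(e/c)]^2.
\]

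Next I would use the elementary Hellinger/total-variation inequality $h^2(p',p)\le \|p'-p\|_1$, which follows coordinate-wise from $(\sqrt a-\sqrt b)^2\le |a-b|$. Combining gives
\[
KL(p',p)\lesssim \|p'-p\|_1 \log(e/c), \qquad V(p',p)\lesssim \|p'-p\|_1 [\log(e/c)]^2.
\]
The $V$ inequality is the binding one. Taking $C$ in the statement to be the reciprocal of the larger of the two implicit proportionality constants then translates $\sum_i |p'_i-p_i| \le C\eps^2/[\log(e/c)]^2$ into both $KL(p',p)\le \eps^2$ and $V(p',p)\le \eps^2$, as required.

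The only non-routine ingredient is the first display. Its proof is short---a Taylor expansion of $x\mapsto x\log x$ about $x=1$ applied pointwise to the ratios $p'_i/p_i$, combined with the uniform bound $\|p'/p\|_\infty\le M$---but it is where the factor $\log(e/c)$ in the $KL$ bound and its square in the $V$ bound appear, so it is the structural heart of the estimate.
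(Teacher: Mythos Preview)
Your proposal is correct and follows essentially the same route as the paper: both invoke the Ghosal--van der Vaart/Wong--Shen bound $KL(p',p)\lesssim h^2(p',p)(1+\log\|p'/p\|_\infty)$ and $V(p',p)\lesssim h^2(p',p)(1+\log\|p'/p\|_\infty)^2$, bound $h^2(p',p)\le \|p'-p\|_1$, and use $\min_i p_i\ge c$ to get $1+\log\|p'/p\|_\infty\le \log(e/c)$. The only cosmetic difference is that the paper cites Lemma~8 of \cite{ghosal07} for the first display and simply bounds $\max(KL,V)\le \overline{C}[\log(e/c)]^2\|p'-p\|_1$ in one line rather than treating $KL$ and $V$ separately.
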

\begin{proof}
	Lemma 8 in \cite{ghosal07} assures that there exists a  constant $\overline{C}$ (not depending on either $p$ or $p^{\prime}$), such that 
	\[  KL(p^{\prime}, p) \le \overline{C} h^2(p^{\prime},p) \left[ 1 + \log\left( \left\| \frac{p^{\prime}}{p} \right\|_{\infty}\right) \right] \]
	and
	\[  V(p^{\prime}, p) \le \overline{C} h^2(p^{\prime},p)  \left[ 1 + \log\left( \left\| \frac{p^{\prime}}{p} \right\|_{\infty}\right) \right]^2. \]
	From Section 3.3 in \cite{pollard_2001} we have $h^2(p^{\prime},p)\le  \sum_{i=1}^m |p^{\prime}_i-p_i|$. 
	Furthermore, since $0<c<1$ and $\min_{1\le i \le m} p_i \ge c$,
	\[ 1 \le 1 + \log\left( \left\| \frac{p^{\prime}}{p} \right\|_{\infty}\right)  \le 1  +\log (1/c)=\log (e/c). \]
	Therefore,
	\[ \max(KL(p^{\prime},p), V(p^{\prime},p)) \le \overline{C}   \left[\log(e/c)\right]^2 \sum_{i=1}^m |p^{\prime}_i-p_i| , \]
	from which the assertion of the lemma follows trivially.
\end{proof}

\begin{lemma}\label{lem:dirichlet}
	Let $m\ge 2$ be an integer.	Suppose $(p_1,\ldots,p_m) \sim \operatorname{Dir}(\alpha_1,\ldots, \alpha_m)$. Let $p_0=(p_{01},\ldots, p_{0m})$ be an arbitrary point in the $m$-dimensional unit simplex. Assume there exists  $\underline\alpha$, such that $0<\underline{\alpha} \le \alpha_i \le 1$ for all $1\le i \le m$. Let $\eps>0$, and let $\eta$ be such that $\eta<\eps^2$. Then if $m\eps<1$,
	\begin{equation}\label{eq:dirineq}
	\begin{split}
	&	\Pi_n\left(\sum_{i=1}^m |p_i - p_{0i}| \le 2 \eps,\,   \min_{1\le i \le m} p_{i} \ge \eta \right)  \\ &\qquad \qquad  \ge  
	\Gamma\left(\sum_{i=1}^m \alpha_i\right)  \exp\left(-m\log(1/(\eps^2-\eta)) - m \log(1/\underline{\alpha})\right).
	\end{split}	
	\end{equation}
\end{lemma}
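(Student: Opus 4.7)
The plan is to exhibit a small product box $B$ contained in the target event on which the Dirichlet density is bounded below by an explicit constant, and then to invoke an elementary bound on $1/\Gamma(\alpha_i)$ for $\alpha_i\in(0,1]$.

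Without loss of generality I would relabel indices so that $p_{0m}=\max_i p_{0i}$, which by pigeonhole satisfies $p_{0m}\ge 1/m$; note that the statement is invariant under a joint permutation of $(\alpha_i)$, $(p_i)$, $(p_{0i})$. Define a shifted reference point $q_0$ by $q_{0i}=p_{0i}+\eta$ for $i<m$ and $q_{0m}=p_{0m}-(m-1)\eta$. Using $\eta<\eps^2$ and $m\eps<1$ (so in particular $\eta<1/m^2$), one checks that $\sum_i q_{0i}=1$, that $q_{0i}\ge\eta$ for every $i$, and that $q_{0m}\ge 1/m-(m-1)/m^2=1/m^2$. Set $\delta:=\eps^2-\eta>0$ and define the box
\[ B=\bigl\{p\colon p_i\in[q_{0i},q_{0i}+\delta]\text{ for }i<m,\ p_m=1-\textstyle\sum_{i<m}p_i\bigr\}. \]

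Next I would verify that $B$ lies inside the event of interest. For $i<m$ the lower endpoint gives $p_i\ge q_{0i}\ge\eta$, and $|p_i-p_{0i}|=\eta+(p_i-q_{0i})\le\eta+\delta=\eps^2$. For the pivot coordinate, $p_m\ge q_{0m}-(m-1)\delta=p_{0m}-(m-1)\eps^2$, and this exceeds $\eta$ because $(m-1)\eps^2+\eta<m\eps^2<1/m\le p_{0m}$; moreover $|p_m-p_{0m}|\le (m-1)\eta+(m-1)\delta=(m-1)\eps^2$. Summing over $i$ gives $\sum_i|p_i-p_{0i}|\le 2(m-1)\eps^2<2\eps$, using $m\eps<1$ once more.

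Finally I would bound the density on $B$ from below. Since $\alpha_i\le 1$ and each $p_i\in(0,1]$, every factor $p_i^{\alpha_i-1}\ge 1$, so the Dirichlet density on $B$ dominates $\Gamma(\sum_i\alpha_i)/\prod_i\Gamma(\alpha_i)$. Writing $\Gamma(\alpha_i)=\Gamma(\alpha_i+1)/\alpha_i$ and using the elementary fact that $\Gamma\le 1$ on $[1,2]$ (equivalently, $\Gamma(\alpha_i+1)\le 1$ for $\alpha_i\in(0,1]$) yields $1/\Gamma(\alpha_i)\ge\alpha_i\ge\underline\alpha$, hence $\prod_i\Gamma(\alpha_i)^{-1}\ge\underline\alpha^m$. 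Multiplying by the $(m-1)$-dimensional Lebesgue volume $\delta^{m-1}\ge\delta^m=(\eps^2-\eta)^m$ of $B$ (using $\delta<1$) gives
\[ \Pi_n(B)\ge\Gamma\bigl(\textstyle\sum_i\alpha_i\bigr)\,\underline\alpha^m\,(\eps^2-\eta)^m, \]
which is exactly \eqref{eq:dirineq} after exponentiation. The one subtle point is the choice of pivot index: it must be picked so that $p_{0m}$ is safely bounded below (by $1/m$), ensuring simultaneously that the box $B$ keeps a margin $\eta$ from $\partial$(simplex), that $\delta=\eps^2-\eta$ is large enough to give the right $m$th power, and that the accumulated $\ell_1$ error stays under $2\eps$, all three of which hinge on the assumption $m\eps<1$.
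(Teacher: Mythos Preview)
Your proof is correct and follows essentially the same approach as the paper's: exhibit a small product box inside the target event on which the Dirichlet density is bounded below by $\Gamma(\sum_i\alpha_i)\,\underline{\alpha}^m$, then multiply by the box volume $(\eps^2-\eta)^{m-1}\ge(\eps^2-\eta)^m$. The paper invokes Lemma~6.1 of Ghosal--Ghosh--van der Vaart (2000) and Lemma~10 of Ghosal--van der Vaart (2007) for the box argument, whereas you spell out the construction explicitly (with a slightly different box, shifted upward by $\eta$ rather than truncated at $\eta$, and with the pivot coordinate chosen as the largest $p_{0i}$); the underlying idea and the resulting bound are the same.
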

\begin{proof}
	By arguments analogous to those in the proofs of Lemma 6.1 in \cite{ghosal00} and Lemma 10 in \cite{ghosal07}, we obtain that the left-hand side of \eqref{eq:dirineq} can be lower bounded by 
	\[ \frac{\Gamma(\sum_{i=1}^m \alpha_i)}{\prod_{i=1}^m \Gamma(\alpha_i)} \prod_{i=1}^{m-1} \int_{\max(p_{0i}-\eps^2, \eta^2)}^{\min(p_{0i}+\eps^2,1)} x_i^{\alpha_i-1} \dd x_i. \]
	The length of the integration interval in each of the integrals in the above product is lower bounded by $\eps^2-\eta$. Using that $\underline{\alpha} \le \alpha_i\le 1$, we deduce that the preceding display is lower bounded by 
	\[ \Gamma\left(\sum_{i=1}^m \alpha_i\right) \underline{\alpha}^m \exp\left(-(m-1)\log\left(\frac{1}{\eps^2-\eta}\right)\right). \]
	This entails the result. 
\end{proof}

\begin{figure}
	\centering
	\captionsetup{width=0.8\textwidth, font=small}
	\includegraphics[trim=0.6cm 0cm 0cm 0cm, clip,width=0.8\textwidth]{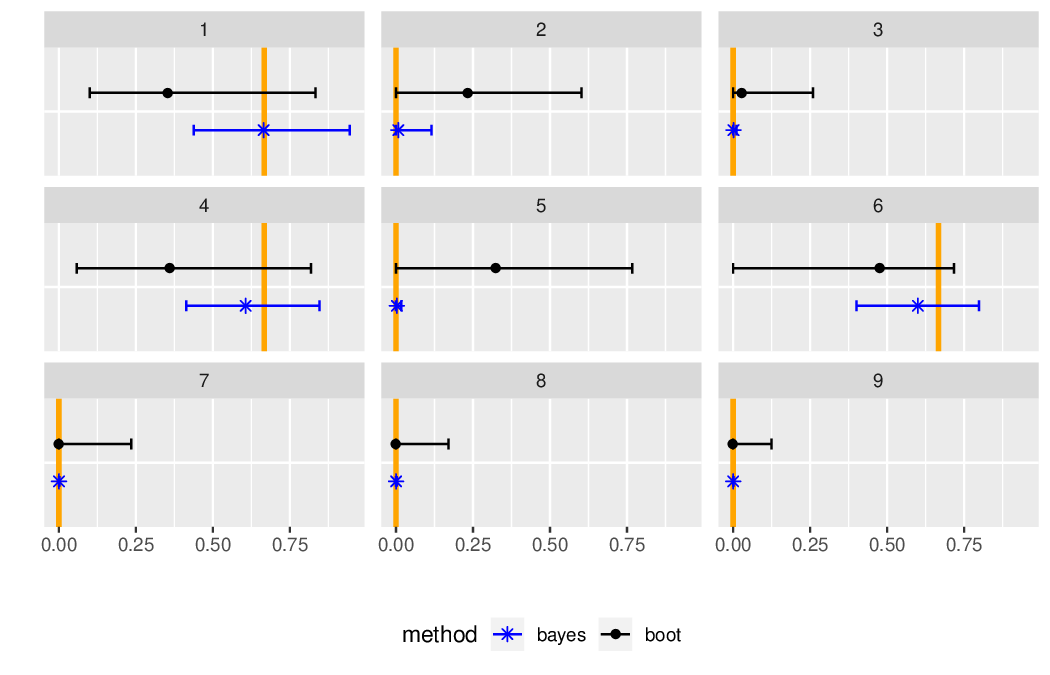}
	\caption{$95\%$ bootstrap confidence intervals and Bayesian credible intervals for the simulation example from Section \ref{subsec:uniform} under setting (a). The displayed results are for parameters $\nu_1$ through $\nu_9$, with panels labelled sequentially from 1 to 9. The true parameter values are visualised with orange vertical lines. The colouring scheme is the same as in Figure \ref{fig:example1}. Note that some of the narrow Bayesian credible intervals are overshadowed by the symbol (star) used to visualise the posterior mean.
		\label{fig:example1-ci}}
\end{figure}

\section{Bootstrap confidence intervals}
\label{app:bg:boot}

Here we report a small comparison between the Bayesian credible intervals and the bootstrap confidence intervals for the Buchmann-Gr\"{u}bel estimator. The setup and the simulated dataset that we used are the same as in Subsection~\ref{subsec:uniform}. The bootstrap confidence intervals were computed as follows: $B=9999$ bootstrap samples were generated from the compound Poisson model under the Buchmann-Gr\"{u}bel estimates computed from the observed data. These bootstrap samples were then fed back to the Buchmann-Gr\"{u}bel procedure to yield $B$ bootstrap estimates of the L\'evy measure $\nu$. Finally, for each $k$, the $\alpha/2$-th and $(1-\alpha/2)$-th sample quantiles were obtained to yield $1-\alpha$ bootstrap confidence intervals for $\nu_k.$ The results with $\alpha=0.05$ are displayed in Figure \ref{fig:example1-ci}. One observes that while both methods result in a good coverage for this specific dataset, the bootstrap appears to be noticeably more conservative than the Bayesian approach, as evidenced by the width of the intervals.

\end{document}